\newtheorem{theorem}{Theorem}[section]
\newtheorem{definition}{Definition}[section]
\newtheorem{example}{Example}[section]
\newtheorem{proposition}{Proposition}[section]
\newtheorem{question}{Question}[section]
\numberwithin{equation}{section}
\def\DJ{\leavevmode\setbox0=\hbox{D}\kern0pt\rlap
 {\kern.04em\raise.188\ht0\hbox{-}}D}
\def\dj{\leavevmode\setbox0=\hbox{d}\kern0pt\rlap
 {\kern.215em\raise.46\ht0\hbox{-}}d}
\def\RR{\mathbb{R}}
\def\NN{\mathbb{N}}
\numberwithin{equation}{section}
\newcommand{\Int}{\mathop{\mathrm{int}}}
\begin{document}

\title[Cone metric spaces]{A new survey: Cone metric spaces}
\author[S. Aleksi\'c, Z. Kadelburg, Z.D. Mitrovi\'c, S.Radenovi\'c]
{Suzana Aleksi\'{c}$^1$, Zoran Kadelburg$^2$, Zoran D.
Mitrovi\'{c}$^3$ and Stojan Radenovi\'{c}$^4$}
\date{}
\maketitle

\noindent$^1$ University of Kragujevac, Faculty of Sciences,
Department of Mathematics and Informatics, Radoja Domanovi\'{c}a
12, 34000 Kragujevac, Serbia
\medskip

\noindent$^2$ University of Belgrade, Faculty of Mathematics,
Studentski trg 16, 11000 Beograd, Serbia
\medskip

\noindent$^3$ University of Banja Luka, Faculty of Electrical
Engineering, Patre 5, 78000 Banja Luka, Bosnia and Herzegovina
\medskip

\noindent$^4$ University of Belgrade, Faculty of Mechanical
Engineering, Department of Mathematics, Kraljice Marije 16, 11120
Belgrade 35, Serbia
\newline
\medskip

E-mail: \texttt{suzanasimic@kg.ac.rs} (S. Aleksi\'{c}),
\texttt{kadelbur@matf.bg.ac.rs} (Z. Kadelburg),
\texttt{zmitrovic@etfbl.net} (Z. D. Mitrovi\'{c}),
\texttt{radens@beotel.rs} (S. Radenovi\'{c})

\bigskip

\noindent \textbf{Abstract.} The purpose of this new survey paper
is, among other things, to collect in one place most of the
articles on cone (abstract, K-metric) spaces, published after
2007. This list can be useful to young researchers trying to work
in this part of functional and nonlinear analysis. On the other
hand, the existing review papers on cone metric spaces are
updated.

The main contribution is the observation that it is usually
redundant to treat the case when the underlying cone is solid and
non-normal. Namely, using simple properties of cones and Minkowski
functionals, it is shown that the problems can be usually reduced
to the case when the cone is normal, even with the respective norm
being monotone. Thus, we offer a synthesis of the respective fixed
point problems arriving at the conclusion that they can be reduced
to their standard metric counterparts. However, this does not mean
that the whole theory of cone metric spaces is redundant, since
some of the problems remain which cannot be treated in this way,
which is also shown in the present article.
\medskip

\noindent \textbf{Key Words and Phrases}: normal cone; non-normal
cone; solid cone; non-solid cone; ordered topological vector
space; cone metric space; Minkowski functional. \medskip

\noindent
\textbf{2010 Mathematics Subject Classification}: 47H10, 54H25.

\bigskip

\section{Introduction}\label{S1}

Since 1922, when S. Banach proved in his PhD thesis (see
\cite{SB1922}) the celebrated Contraction Principle for
self-mappings on a complete metric space, several hundreds of
researchers have tried to generalize or improve it. Basically,
these generalizations were done in two directions---either the
contractive condition was replaced by some more general one, or
the environment of metric spaces was widened. In the first
direction, a lot of improved results appeared, like Kannan's,
Chatterjea's, Zamfirescu's, Hardy-Rogers's, \'Ciri\'c's,
Meir-Keeler's, Boyd-Wong's, to mention just a few. The other
direction of investigations included introduction of semimetric,
quasimetric, symmetric, partial metric, $b$-metric, and many other
classes of spaces.

Of course, not all of these attempts were useful in applications,
which should be the main motive for such investigations. Some of
them were even not real generalizations, since the obtained
results appeared to be equivalent to the already known ones.

It seems that \DJ.\ Kurepa was the first to replace the set $\RR$
of real numbers by an arbitrary partially ordered set as a
codomain of a metric (see~\cite{Ku}). This approach was used by
several authors in mid-20th century, who used various names
(abstract metric spaces, $K$-metric spaces, \dots) for spaces thus
obtained (see, e.g., \cite{K,kz,V,Z}). The applications were
mostly in numerical analysis.

The interest in such spaces increased after the 2007 paper
\cite{HZ} of L.~G.~Huang and X.~Zhang, who re-introduced the
mentioned type of spaces under the name of \textit{cone metric
spaces}. Their approach included the use of interior points of the
cone which defined the partial order of the space. The original
paper \cite{HZ} used also the assumption of normality of the cone,
but later most of the results were obtained for non-normal cones,
however with more complicated proofs.

It was spotted very early that (topological vector) cone metric
spaces were metrizable (see, e.g., \cite{dU,F,KRR1,KP}). However,
this does not necessarily mean that all fixed point results in
cone metric spaces reduce to their standard metric counterparts,
since at least some of them still depend on the particular cone
that is used. Hence, the research in these spaces has continued
and more than 300 papers has appeared---most of them are cited in
the list of references of the present paper (we probably skipped
some of them, but it was not on purpose).

The intention of this article is, first, to recollect the basic
properties of cones in (topological) vector spaces which are
important for research in cone metric spaces. In particular
(non)-normal and (non)-solid cones are investigated in some
details. Then, it is shown that each solid cone in a topological
vector space can be essentially replaced by a solid and normal
cone in a normed space (even with normal constant equal to~$1$).
Thus, most of the theory of (TVS) cone metric spaces can be easily
reduced to the respective problems in standard metric spaces.

It has to be noted that some problems still remain to which the
previous assertion cannot be applied--- we will mention some of
them in Section \ref{Sec4}.

The present paper can also be treated as a continuation and
refinement of the existing survey articles on this theme, like
\cite{JKR,Kh2,PDP}.

\section{Topological and ordered vector spaces}\label{Sec2}

We start our paper with some basic facts about ordered topological
vector spaces. For more details, the reader may consult any book
on this topic, e.g., \cite{AlI,D,ZS,HH,WN}.

\subsection{Topological vector spaces}

Throughout the paper, $E$ will denote a vector space over the
field $\RR$ of real numbers, with zero-vector denoted by~$\theta$.
A topology $\tau$ on $E$ is called a \textit{vector topology} if
linear operations $(x,y)\mapsto x+y$ and
$(\lambda,x)\mapsto\lambda x$ are continuous on $E\times E$,
resp.\ $\RR\times E$. In this case, $(E,\tau)$ is called a
\textit{$($real\/$)$ topological vector space\/} (TVS). If a TVS
has a base of neighbourhoods of $\theta$ whose elements are
(closed and absolutely) convex sets in $E$, then it is called
\textit{locally convex\/} (LCS).

The topology of an LCS can be characterized using seminorms.
Namely, a nonnegative real function $p$ on a vector space $E$ is
called a \textit{seminorm} if: $1^{\circ}$~$p(x+y)\le p(x)+p(y)$
for $x,y\in E$; \ $2^{\circ}$~$p(\lambda x)=|\lambda|\,p(x)$ for
$x\in E$, $\lambda\in\RR$. It is easy to prove that a function
$p:E\to\RR$ is a seminorm if and only if $p$ is the
\textit{Minkowski functional\/} of an absolutely convex and
absorbing set $M\subset E$, i.e., $p=p_M$ is given by
\[
x\mapsto p_M(x)=\inf\{\,\lambda>0\mid x\in\lambda M\,\}.
\]
Here, one can take $M=\{\,x\in E\mid p(x)<1\,\}$. The following
proposition provides the mentioned characterization.

\begin{proposition}\label{prop1}
Let $\{\,p_i\mid i\in I\,\}$ be an arbitrary family of seminorms
on a vector space $E$. If\/ $U_i=\{\,x\in E\mid p_i(x)<1\,\}$,
$i\in I$, then the family of all scalar multiples $\lambda U$,
$\lambda>0$, where $U$ runs through finite intersections
$U=\bigcap_{j=1}^nU_j$, forms a base of the neighbourhoods of
$\theta$ for a locally convex topology $\tau$ on $E$ in which all
the seminorms $p_i$ are continuous. Each LCS $(E,\tau)$ can be
described in such a way. If we put in the previous construction
$U_i=\{\,x\in E\mid p_i(x)\le1\,\}$, we obtain a base of
neighbourhoods of $\theta$ constituted of closed absolutely convex
sets.
\end{proposition}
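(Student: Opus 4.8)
The plan is to prove the statement in the three parts it naturally splits into: (i) the family described is a neighbourhood base of $\theta$ for a topology making all $p_i$ continuous; (ii) conversely, every LCS topology arises this way; (iii) the closed-set variant. For (i), I would first check that the proposed collection $\mathcal{B}$ of sets $\lambda U$ (with $U$ a finite intersection of the $U_j$) satisfies the standard axioms for a neighbourhood base of $\theta$ in a vector topology: every member is absolutely convex and absorbing (convexity and balancedness are inherited from finite intersections and scalar multiples; absorption follows since each $U_i$ is absorbing because $p_i$ is finite-valued), the intersection of all members is $\{\theta\}$ only if the family separates points — but note the proposition does not claim Hausdorffness, so I only need the filter-base property: given $\lambda U$ and $\mu V$ in $\mathcal{B}$, the set $\min(\lambda,\mu)(U\cap V)$ lies in $\mathcal{B}$ and is contained in both. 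The key quantitative fact is that for $U = \bigcap_{j=1}^n U_j$ one has $\tfrac12 U + \tfrac12 U \subseteq U$ (by the triangle inequality $p_j(x/2 + y/2) \le \tfrac12 p_j(x) + \tfrac12 p_j(y) < 1$) and $\alpha U \subseteq U$ for $|\alpha|\le 1$; these are exactly what one needs to invoke the standard theorem that a filter base of absolutely convex absorbing sets with the "$\tfrac12 U + \tfrac12 U \subseteq U$" property generates a unique vector topology. Continuity of each $p_i$ at $\theta$ is immediate since $p_i^{-1}([0,\varepsilon)) \supseteq \varepsilon U_i \in \mathcal{B}$, and continuity everywhere then follows from $|p_i(x) - p_i(y)| \le p_i(x-y)$.

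For part (ii), let $(E,\tau)$ be an arbitrary LCS. By definition it has a base $\mathcal{W}$ of neighbourhoods of $\theta$ consisting of (closed) absolutely convex sets; without loss of generality I may also assume each such $W$ is absorbing, which holds automatically for any neighbourhood of $\theta$ in a TVS. For each $W \in \mathcal{W}$ form its Minkowski functional $p_W$; by the characterization recalled just before the proposition, each $p_W$ is a seminorm, and moreover $\{x : p_W(x) < 1\} \subseteq W \subseteq \{x : p_W(x) \le 1\}$. Running the construction of part (i) on the family $\{p_W : W \in \mathcal{W}\}$ yields a topology $\tau'$; the sandwich inclusions show that the $\tau'$-basic sets and the sets of $\mathcal{W}$ are mutually cofinal, hence $\tau' = \tau$. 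This is the step I expect to be the main (though still routine) obstacle: one has to be a little careful that the passage $W \mapsto p_W \mapsto \{p_W < 1\}$ does not change the generated topology, i.e.\ that shrinking each basic neighbourhood to the open unit ball of its gauge still gives a base — this is exactly where local convexity (absolute convexity of the $W$) is used, via the inequality $p_W \le 1$ on $W$ together with $\lambda W$ being a neighbourhood for all $\lambda > 0$.

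For part (iii), replacing $U_i = \{p_i < 1\}$ by $\bar U_i = \{p_i \le 1\}$, I note that $\{p_i < 1\} \subseteq \{p_i \le 1\} \subseteq \{p_i < 1+\varepsilon\} = (1+\varepsilon)\{p_i < 1\}$ for every $\varepsilon>0$, so the two families of scalar multiples of finite intersections are cofinal in each other and generate the same topology; each $\bar U_i$ is absolutely convex (being a sublevel set of a seminorm) and $\tau$-closed because $p_i$ is $\tau$-continuous, and finite intersections of closed absolutely convex sets are again closed and absolutely convex, giving the desired base. Throughout, the only genuine inputs are the elementary algebra of seminorms/Minkowski functionals already recorded in the excerpt and the standard fact that an absolutely convex absorbing filter base satisfying $\tfrac12 U + \tfrac12 U \subseteq U$ defines a vector topology; everything else is bookkeeping with the sandwich inclusions $\{p < 1\} \subseteq M \subseteq \{p \le 1\}$.
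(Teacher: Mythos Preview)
The paper does not actually prove Proposition~\ref{prop1}; it is stated as a standard background fact, with the reader referred to the textbooks \cite{AlI,D,ZS,HH,WN} for details. Your sketch follows exactly the classical route found in those references (verify that the collection $\mathcal{B}$ is a filter base of absolutely convex absorbing sets satisfying $\tfrac12 U+\tfrac12 U\subseteq U$, invoke the standard criterion for a vector topology, then for the converse use Minkowski functionals of basic neighbourhoods together with the sandwich $\{p_W<1\}\subseteq W\subseteq\{p_W\le1\}$, and for the closed variant use cofinality of $\{p_i<1\}$ and $\{p_i\le1\}$), and is correct as outlined. There is nothing to compare against in the paper itself.
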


The easiest (and well known) examples of locally convex spaces are
normed and, in particular, Banach spaces. The topology is then
generated by a single norm which is the Minkowski functional of
the unit ball.

\subsection{Cones in ordered topological vector spaces}

Let $(E,\tau)$ be a real TVS. A subset $C$ of $E$ is called a
\textit{cone} if: $1^{\circ}$~$C$ is closed, nonempty and
$C\ne\{\theta\}$; \ $2^{\circ}$~$x,y\in C$ and $\lambda,\mu\ge0$
imply that $\lambda x+\mu y\in C$; \
$3^{\circ}$~$C\cap(-C)=\{\theta\}$. Obviously, each cone generates
a partial order $\preceq_C$ in $E$ by
\[
x\preceq_C y\iff y-x\in C
\]
(we will write just $\preceq$ if it is obvious which is the
respective cone). And conversely, a partial order $\preceq$ on $E$
generates a cone $C=\{\,x\in E\mid x\succeq\theta\,\}$. If
$\preceq$ is a partial order on a TVS $E$, then $(E,\tau,\preceq)$
is called an \textit{ordered topological vector space\/} (OTVS).
If $x\preceq y$ and $x\ne y$, we will write $x\prec y$.

The most important examples of OTVS's are special cases of the
following one.

\begin{example}\label{Ex1}
{\rm Let $X$ be a nonempty set and $E$ be the set of all
real-valued functions $f$ defined on~$X$. Let a relation $\preceq$
be defined on $E$ by $f\preceq g$ if and only if $f(x)\le g(x)$
for all $x\in X$. If $\tau$ is a vector topology on $E$ such that
the respective cone $C=\{\,f\in E\mid f(x)\ge0,\;x\in X\,\}$ is
$\tau$-closed, then $(E,\tau,\preceq)$ is an OTVS.}
\end{example}

If $(E,\preceq)$ is an ordered vector space (with the respective
cone~$C$) and $x,y\in E$, $x\preceq y$, then the set
$$
[x,y]=\{\,z\in E\mid x\preceq z\preceq y\,\}=(x+C)\cap(y-C)
$$
is called an {\it order-interval}. A subset $A$ of $E$ is {\it
order-bounded\/} if it is contained in some order-interval. It is
{\it order-convex\/} if $[x,y]\subset A$ whenever $x,y\in A$ and
$x\preceq y$. It has to be remarked that the families of {\it
convex\/} and {\it order-convex\/} subsets of an ordered vector
space are incomparable in general.

An OTVS $(E,\tau,\preceq)$ is said to be \textit{order-convex\/}
if it has a base of neighborhoods of $\theta$ consisting of
order-convex subsets. In this case the order cone $C$ is said to
be \textit{normal}. In the case of a normed space, this condition
means that the unit ball is order-convex, which is equivalent to
the condition that there is a number $K$ such that $x,y\in E$ and
$\theta\preceq x\preceq y$ imply that $\|x\|\leq K\|y\|$. The
smallest constant $K$ satisfying the last inequality is called the
\textit{normal constant\/} of~$C$.

Equivalently, the cone $C$ in $(E,\|\cdot\|,\preceq_C)$ is normal
if the Sandwich Theorem holds, i.e., if
\begin{equation}\label{normal}
(\forall n)\;x_n\preceq y_n\preceq z_n \text{ and
}\lim_{n\to\infty}x_n=\lim_{n\to\infty}z_n=x\text{ imply }
\lim_{n\to\infty}y_n=x.
\end{equation}
In particular, if a cone is normal with the constant equal to~$1$,
i.e., if $\theta\preceq x\preceq y$ implies that $\|x\|\leq
\|y\|$, the cone is called \textit{monotone\/} w.r.t.\ the given
norm.

The following is an example of a non-normal cone.

\begin{example}\label{JV}{\rm\cite{V}}
{\rm Let $E=C_{\mathbb{R}}^1[0,1]$ with $\|x\|=\|x\|_\infty
+\|x^{\prime}\|_\infty$ and $C=\{\,x\in E\mid x(t)\geq
0,\;t\in[0,1]\,\}$. Consider, for example, $x_n(t)=\frac{t^n}n$
and $y_n(t)=\frac 1n$, $n\in\NN$. Then $\theta \preceq x_n\preceq
y_n$, and $\lim_{n\to\infty}y_n=\theta$, but
$\|x_n\|=\max_{t\in[0,1]}|\frac{t^n}n|+\max_{t\in[0,1]}|t^{n-1}|
=\frac 1n+1>1$; hence $\{x_n\}$ does not converge to zero. It
follows by \eqref{normal} that $C$ is a non-normal cone.}
\end{example}

The cone $C$ in an OTVS $(E,\tau)$ is called \textit{solid\/} if
it has a nonempty interior $\Int C$.

There are numerous examples of cones which are solid and normal.
Perhaps the easiest is the following one.

\begin{example}\label{Ex3}
{\rm  Let $E=\RR^n$ with the Euclidean topology $\tau$. Let the
cone $C$ be given as in Example \ref{Ex1}, i.e.,
$C=\{\,(x_i)_{i=1}^n\mid x_i\ge0,\;i=1,2,\dots,n\,\}$. Then it is
easy to see that $C$ is a solid and normal cone in~$(E,\tau)$.}
\end{example}

Example \ref{JV} shows that there exists a cone which is solid and
non-normal. A lot of spaces which are important in functional
analysis have cones which are normal and non-solid. We present
just two of them.

\begin{example}\label{Ex4}
{\rm (see, e.g., \cite{J}) Consider $E=c_0$ (the standard Banach
space of all real $0$-sequences) and let $C=\{\,x=\{x_n\}\mid
x_n\ge0,\;n\in\NN\,\}$. It follows easily that $C$ is a normal
cone in~$E$. Let us prove that it has an empty interior.

Take any $x=\{x_n\}\in C$. If $x_n=0$ for $n\ge n_0$, then
obviously $x\notin\Int C$. Otherwise, for arbitrary
$\varepsilon>0$, there exists $n_0\in\NN$ such that
$x_{n}<\frac\varepsilon2$ for all $n>n_0$. Construct a sequence
$y=\{y_n\}$ by
\[
y_n=\begin{cases} x_n,&\text{if }n\le n_0,\\
-x_n,&\text{for }n>n_0.\end{cases}
\]
Then, at least one of $y_n$ is strictly negative, hence $y\notin
C$. However, $\|x-y\|=\sup_{n>n_0}|2x_n|<\varepsilon$, therefore,
the ball $B(x,\varepsilon)$ is not a subset of $C$. Thus, $\Int
C=\emptyset$.}
\end{example}

\begin{example}\label{Ex5}
{\rm \cite{NM} Let $E=L^2(\RR,\mu)\cap C(\RR)$ be equipped with
the norm $\|\cdot\|_E=\|\cdot\|_{L^2}+\|\cdot\|_\infty$, where
$\mu$ is the Lebesgue measure. Let
\[
C=\{\, h\in E \mid h(t)\geq0,\;t\in\RR\,\}
\]
be the cone of all positive elements in $E$. We will show that $C$
has an empty interior. For any $f\in C$ and each $\delta>0$,
define $N_f(\delta)=\{\,t\in\RR\mid f(t)\geq\delta\,\}$. Then
$\mu(N_f(\delta))<\infty$, and therefore $\mu(\RR\setminus
N_f(\delta))=\infty$. Note that
\begin{align*}
\mu(\RR\setminus N_f(\delta))
&=\mu\biggl(\bigcup_{r\in\mathbb{Q}}\bigl(B(r,\delta)\cap(\RR\setminus
N_f(\delta))\bigr)\biggr)\\
&\leq \sum_{r\in\mathbb{Q}}\mu \bigl( B(r,\delta)\cap(\RR\setminus
N_f(\delta))\bigr) =\infty ,
\end{align*}
which means that $\mu(A)>0$, where $A=B(s,\delta)\cap(\RR\setminus
N_f(\delta))$, for some $s\in\mathbb{Q}$. Define
\[
g(x) =\begin{cases} f(x),& x\notin A, \\
f(x)-\delta,& x\in A.
\end{cases}
\]
Then $g$ is negative on the set $A$ of positive measure and
\begin{equation*}
\| f-g\|_{L_2}+\| f-g\| _\infty  =\biggl(\int_{A}
\delta^2\,d\mu\biggr)^{1/2} + \sup_{A}  \delta  \leq (2\delta
^3)^{1/2}+\delta.
\end{equation*}
Hence, for any $f\in C$ and arbitrary $\varepsilon>0$, we can find
$g\notin C$ with $\|f-g\|_E<\varepsilon$, i.e., there is no ball
with centre $f$ that lies inside $C$. Therefore, $C$ has an empty
interior.}
\end{example}

We state now the following properties of solid cones.

$\bullet$ \cite[Proposition (2.2), page 20]{WN} $e\in\Int C$ if
and only if $[-e,e] = (C-e) \cap (e-C)$ is a $\tau$-neighbourhood
of~$\theta$.

$\bullet$ If $e\in\Int C$, then the Minkowski functional
$\|\cdot\|_e=p_{[-e,e]}$ is a norm on the vector space~$E$.
Indeed, one has just to prove that $\|x\|_e=0$ implies that
$x=\theta$. Suppose, to the contrary, that $x\ne\theta$. Then, by
the definition of infimum, it follows that there is a sequence
$\{\lambda_n\}$ of positive reals tending to $0$, and such that
$x\in\lambda_n[-e,e]$, i.e., $-\lambda_ne\preceq
x\preceq\lambda_ne$. It follows that the sequences
$\{\lambda_ne-x\}$ and $\{x+\lambda_ne\}$ both belong to $C$.
Since they converge to $-x$ and $x$, respectively, the closedness
of the cone $C$ implies that $-x,x\in C$. Hence, $x=\theta$, which
is a contradiction. See also \cite{AlI,JK}.

Note that this conclusion was stated in \cite{Kh1} [the paragraph
before Theorem 3.1] under the additional assumption that the cone
$C$ is normal. The previous proof shows that this assumption is in
fact redundant.

$\bullet$ If $e_1,e_2\in\Int C$, then the respective norms
$\|\cdot\|_{e_1}$ and $\|\cdot\|_{e_2}$ are equivalent. In
particular, the interiors of the cone $C$ w.r.t.\ the norms
$\|\cdot\|_{e_1}$ and $\|\cdot\|_{e_2}$ coincide; in fact, they
are equal to $\Int_{\tau}C$. This follows by the characterization
of interior points of a cone in an OTVS.

$\bullet$ If $\Int_{\tau}C\neq \emptyset$, then the topology
$\tau$ is Hausdorff. Indeed, if $e\in \Int C$, then the unit ball
$[-e,e]$ is a neighbourhood of $\theta$ in the topology generated
by the norm $\|\cdot\|_e$. Since this norm topology is Hausdorff
and not stronger than $\tau$, it follows that $\tau$ is Hausdorff,
too.

We will show now that $C$ is a cone, i.e., it is closed, in the
topology generated by $\|\cdot\|_e$. Let $x_n\in C$ and $x_n\to x$
as $n\to\infty$, in the norm $\|\cdot\|_e$. This means that
$-x\preceq -(x-x_n)\preceq \frac1n e$ for $n\geq n_0$. Passing to
the limit as $n\to\infty$, we get that $-x\preceq \theta$, i.e.,
$x\succeq \theta$, and so $x\in C$.

Thus, we have proved the next assertion (see also
\cite{Ale,D,KPRR,rAD,R3,VeF}).

\begin{proposition}\label{prop2}
If $C$ is a solid cone in an OTVS $(E,\tau,\preceq_C)$, then there
exists a norm $\|\cdot\|_e$ on $E$ which is monotone and such that
the cone $C$ is normal and solid w.r.t.\ this norm. In particular,
the cone $C$ has the same set of interior points, both in topology
$\tau$ and in the norm generated by $\|\cdot\|_e$.
\end{proposition}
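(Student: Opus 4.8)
The plan is to fix once and for all a point $e\in\Int_\tau C$ (possible since $C$ is solid) and to carry everything out with the single Minkowski functional $\|\cdot\|_e=p_{[-e,e]}$. By the bulleted observations preceding the statement, $\|\cdot\|_e$ is already a norm on $E$ and $C$ is already closed in the topology $\tau_e$ it generates, so $C$ is a genuine cone in the normed space $(E,\|\cdot\|_e)$. Hence only three things remain: (i) $\|\cdot\|_e$ is monotone with respect to $\preceq_C$; (ii) $C$ is solid in $\tau_e$; and (iii) $\Int_\tau C=\Int_{\tau_e}C$.

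For (i) I would unwind the definition $\|y\|_e=\inf\{\lambda>0\mid -\lambda e\preceq y\preceq\lambda e\}$: if $\theta\preceq x\preceq y$ and $\lambda>0$ satisfies $-\lambda e\preceq y\preceq\lambda e$, then, since $\lambda e\in C$ we also have $-\lambda e\preceq\theta$, and transitivity gives $-\lambda e\preceq\theta\preceq x\preceq y\preceq\lambda e$, i.e.\ $x\in\lambda[-e,e]$; thus every $\lambda$ admissible for $y$ is admissible for $x$ and $\|x\|_e\le\|y\|_e$. This is exactly condition \eqref{normal} with constant $1$, so $C$ is monotone---in particular normal---for $\|\cdot\|_e$. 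For (ii) I would check directly that $e\in\Int_{\tau_e}C$: if $\|x-e\|_e<1$ then $-\lambda e\preceq x-e\preceq\lambda e$ for some $\lambda<1$, and the left inequality gives $x-(1-\lambda)e\in C$; since $(1-\lambda)e\in C$ this yields $\theta\preceq x$, so the $\|\cdot\|_e$-ball $\{x\in E\mid\|x-e\|_e<1\}$ lies inside $C$ and $C$ is solid in $\tau_e$.

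For (iii): the identity map $(E,\tau)\to(E,\tau_e)$ is continuous, since $\|\cdot\|_e$ is the Minkowski functional of the $\tau$-neighbourhood $[-e,e]$ of $\theta$ and hence $\tau$-continuous; therefore $\tau_e\subseteq\tau$, so every $\tau_e$-open subset of $C$ is $\tau$-open and $\Int_{\tau_e}C\subseteq\Int_\tau C$. For the reverse inclusion take $a\in\Int_\tau C$: by the third bulleted observation $\|\cdot\|_a$ is a norm equivalent to $\|\cdot\|_e$, so $\tau_a=\tau_e$, and the argument of (ii) with $a$ in place of $e$ gives $a\in\Int_{\tau_a}C=\Int_{\tau_e}C$. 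Thus $\Int_\tau C=\Int_{\tau_e}C$, and combining (i)--(iii) with the facts already in place proves the proposition.

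The one step I expect to need real care is (iii): one must be sure the norm topology is genuinely coarser than $\tau$ (so that $\tau_e$-openness of a subset of $C$ upgrades to $\tau$-openness), and, for the reverse inclusion, one must be entitled to re-centre the Minkowski functional at an arbitrary interior point $a$---which is precisely where the equivalence of the norms $\|\cdot\|_{e_1}$ and $\|\cdot\|_{e_2}$ for $e_1,e_2\in\Int_\tau C$ is used. Everything else is a short computation once $\|\cdot\|_e$ is written out in terms of order-intervals.
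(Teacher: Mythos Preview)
Your proposal is correct and follows essentially the same route as the paper: the proof in the paper is precisely the sequence of bulleted observations preceding the proposition (``Thus, we have proved the next assertion''), all built on the Minkowski functional $\|\cdot\|_e=p_{[-e,e]}$ of the order-interval $[-e,e]$ for a fixed $e\in\Int_\tau C$. You have in fact supplied more detail than the paper does---in particular, the paper does not spell out the monotonicity argument (your part~(i)) at all, nor the explicit ball-in-cone computation (your part~(ii)); it simply records the conclusion. Your treatment of the equality $\Int_\tau C=\Int_{\tau_e}C$ via $\tau_e\subseteq\tau$ plus re-centring at an arbitrary $a\in\Int_\tau C$ is exactly how the paper's third bullet (``the interiors \dots\ coincide; in fact, they are equal to $\Int_\tau C$'') is meant to be unpacked.

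One small slip: when you say ``This is exactly condition \eqref{normal} with constant~$1$'', note that \eqref{normal} in the paper is the Sandwich Theorem formulation of normality, not the inequality $\theta\preceq x\preceq y\Rightarrow\|x\|\le\|y\|$ that you actually established. What you proved is the \emph{definition} of a monotone norm given just above \eqref{normal}; the two are equivalent characterisations, but the cross-reference is misplaced.
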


Further, following \cite{HZ}, denote
\begin{equation}\label{ll}
x\ll y\quad \text{if and only if}\quad y-x\in\Int C.
\end{equation}
Let $\{x_n\}$ be a sequence in $(E,\tau,\preceq_C)$ and $c\in\Int
C$. We will say that $\{x_n\}$ is a \textit{$c$-sequence} if there
exists $n_0\in\NN$ such that $x_n\ll c$ whenever $n\ge n_0$. The
connection between $c$-sequences and sequences tending to $\theta$
is the following.

$\bullet$ If $x_n\to\theta$ as $n\to\infty$, then it is a
$c$-sequence for each $c\in\Int C$. Indeed, if $c\in\Int C$, then
$[-c,c]$ is a $\tau$-neighbourhood of $\theta$ and it follows that
$\Int[-c,c]=(\Int C-c)\cap(c-\Int C)$ is also  a
$\tau$-neighbourhood of $\theta$. Hence, there exists $n_0\in\NN$
such that, for all $n\ge n_0$, $x_n\in(\Int C-c)\cap(c-\Int C)$,
i.e., $c-x_n\in\Int C$, and so $x_n\ll c$.

$\bullet$ The converse of the previous assertion is not true,
i.e., $x_n\ll c$ for some $c\in\Int C$ and all $n\ge n_0$ does not
necessarily imply that $x_n\to\theta$ (see Example \ref{JV}).
However, if the cone $C$ is normal, then these two properties of a
sequence $\{x_n\}$ are equivalent.

Note also the following properties of bounded sets.

$\bullet$ If the cone $C$ is solid, then each topologically
bounded subset of $(E,\tau,\preceq_C)$ is also order-bounded,
i.e., it is contained in a set of the form $[-c,c]$ for some
$c\in\Int C$.

$\bullet$ If the cone $C$ is normal, then each order-bounded
subset of $(E,\tau,\preceq_C)$ is topologically bounded. Hence, if
the cone is both solid and normal, these two properties of subsets
of $E$ coincide.

In other words, we have the following property (see, e.g.,
\cite{V}).

\begin{proposition}\label{prop3}
If the underlying cone of an OTVS is solid and normal, then such
TVS must be an ordered normed space.
\end{proposition}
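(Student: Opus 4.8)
The plan is to show that the given topology $\tau$ coincides with the topology $\tau_e$ generated by the norm $\|\cdot\|_e=p_{[-e,e]}$ for an arbitrarily fixed $e\in\Int C$; once this is established, $(E,\tau,\preceq_C)$ is by construction an ordered normed space, and by Proposition \ref{prop2} the norm may even be taken monotone. I would compare the two topologies through the single candidate neighbourhood base $\{\,\lambda[-e,e]\mid\lambda>0\,\}$ of $\theta$: this family is automatically a base of $\tau_e$-neighbourhoods of $\theta$ (it is sandwiched between the open and the closed $\|\cdot\|_e$-balls centred at $\theta$), so the whole matter reduces to showing that it is a base of $\tau$-neighbourhoods of $\theta$ as well.

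That the sets $\lambda[-e,e]$ are $\tau$-neighbourhoods of $\theta$ is immediate: since $e\in\Int_\tau C$, the first of the listed properties of solid cones gives that $[-e,e]=(C-e)\cap(e-C)$ is a $\tau$-neighbourhood of $\theta$, and hence so is each dilation $\lambda[-e,e]=[-\lambda e,\lambda e]$, $\lambda>0$. The substantial step is the converse, namely that every $\tau$-neighbourhood $U$ of $\theta$ contains some $\lambda[-e,e]$, and this is exactly where normality of $C$ is used. By the very definition of a normal cone, $(E,\tau,\preceq_C)$ has a base of neighbourhoods of $\theta$ consisting of order-convex sets, so one may choose an order-convex $\tau$-neighbourhood $V$ of $\theta$ with $V\subseteq U$. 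By continuity of the map $\lambda\mapsto\lambda e$ at $0$, there is $\lambda>0$ with $\lambda e\in V$ and $-\lambda e\in V$; since $2\lambda e\in C$ we have $-\lambda e\preceq_C\lambda e$, and order-convexity of $V$ forces $[-\lambda e,\lambda e]=\lambda[-e,e]\subseteq V\subseteq U$.

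Consequently $\{\,\lambda[-e,e]\mid\lambda>0\,\}$ is a neighbourhood base of $\theta$ for both $\tau$ and $\tau_e$, and since both are vector topologies this yields $\tau=\tau_e$. Hence $(E,\tau)$ is the normed space $(E,\|\cdot\|_e)$, which together with the order $\preceq_C$ is an ordered normed space; invoking Proposition \ref{prop2} once more shows that $\|\cdot\|_e$ is monotone, so in fact the normal constant may be taken equal to $1$. The only point that requires care is the correct reading of ``normal cone'' in the topological setting (existence of an order-convex neighbourhood base of $\theta$) together with the elementary observation that an order-convex set containing both $-\lambda e$ and $\lambda e$ must contain the entire order-interval $[-\lambda e,\lambda e]=\lambda[-e,e]$; everything else is routine.
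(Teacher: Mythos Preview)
Your argument is correct. The paper does not give a self-contained proof of this proposition; it simply records it as a reformulation of the two preceding bullet points on bounded sets (solid $\Rightarrow$ topologically bounded sets are order-bounded; normal $\Rightarrow$ order-bounded sets are topologically bounded) and refers to \cite{V}. That route implicitly appeals to Kolmogorov's normability criterion: $[-e,e]$ is a convex $\tau$-neighbourhood of $\theta$ (solidity) which is order-bounded and hence, by normality, topologically bounded, so the Hausdorff TVS $(E,\tau)$ is normable via the Minkowski functional of $[-e,e]$.

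Your approach bypasses the boundedness detour and compares the two topologies directly via the common neighbourhood base $\{\lambda[-e,e]\mid\lambda>0\}$, using only the order-convex-base definition of normality. This is cleaner and more self-contained than what the paper sketches; in particular you never need to invoke or reprove Kolmogorov's criterion, and you obtain at once that the norm in question is precisely $\|\cdot\|_e$, not merely some norm. The two arguments are of course close cousins (the inclusion $\lambda[-e,e]\subseteq V$ you derive from order-convexity is exactly what makes $[-e,e]$ topologically bounded), but yours is the more transparent presentation.
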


The following old result of M.\ Krein can also be useful when
dealing with cones in normed spaces.

\begin{proposition}\label{lemma1}{\rm\cite{Kr}}
A cone $C$ in a normed space $(E,\|\cdot\|)$ is normal if and only
if there exists a norm $\|\cdot\|_{1}$ on $E$, equivalent to the
given norm $\|\cdot\|$, such that the cone $C$ is monotone w.r.t.\
$\|\cdot\|_{1}$.
\end{proposition}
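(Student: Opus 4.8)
The plan is to treat the two implications separately. The ``if'' part is immediate: if $\|\cdot\|_1$ is a norm on $E$ equivalent to $\|\cdot\|$, say $a\|z\|\le\|z\|_1\le b\|z\|$ for all $z\in E$ with constants $a,b>0$, and if $C$ is monotone w.r.t.\ $\|\cdot\|_1$, then $\theta\preceq x\preceq y$ gives $a\|x\|\le\|x\|_1\le\|y\|_1\le b\|y\|$, hence $\|x\|\le(b/a)\|y\|$; by the characterisation of normality in a normed space recalled in Section~\ref{Sec2}, $C$ is then normal (with normal constant at most $b/a$).

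For the converse, assume $C$ is normal with normal constant $K$, so that $\theta\preceq x\preceq y$ implies $\|x\|\le K\|y\|$. Writing $V=\{\,z\in E\mid\|z\|\le1\,\}$ for the closed unit ball, I would pass to its order-convex hull
\[
W:=(V+C)\cap(V-C)=\{\,z\in E\mid a\preceq z\preceq a'\text{ for some }a,a'\in V\,\}
\]
and put $\|\cdot\|_1:=p_W$, the Minkowski functional of $W$. The steps are: (i) $W$ is convex (an intersection of the convex sets $V\pm C$), symmetric (as $-V=V$) and contains $\theta$, hence balanced, and absorbing (as $V\subseteq W$); (ii) $W$ is bounded, in fact $V\subseteq W\subseteq(2K+1)V$; (iii) consequently $p_W$ is a genuine norm and the inclusions of (ii) give $\frac1{2K+1}\|x\|\le\|x\|_1\le\|x\|$, so $\|\cdot\|_1$ is equivalent to $\|\cdot\|$; (iv) $C$ is monotone w.r.t.\ $\|\cdot\|_1$.

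Step (ii) is the only place where the normality hypothesis is genuinely used, and it is the main obstacle: given $z\in W$, pick $a,a'\in V$ with $a\preceq z\preceq a'$; then $\theta\preceq z-a\preceq a'-a$, so normality yields $\|z-a\|\le K\|a'-a\|\le2K$ and hence $\|z\|\le2K+1$, while $V\subseteq W$ is immediate. Granted (ii), step (iii) is standard---the Minkowski functional of a bounded, convex, balanced, absorbing set is a norm, positive-definiteness being exactly what boundedness provides. For step (iv): if $\theta\preceq x\preceq y$ and $\lambda>\|y\|_1$, then $y\in\lambda W$, i.e.\ $a\preceq y/\lambda\preceq a'$ for some $a,a'\in V$; since also $\theta\preceq x/\lambda\preceq y/\lambda\preceq a'$ with $\theta,a'\in V$, we obtain $x/\lambda\in W$, that is $\|x\|_1\le\lambda$, and letting $\lambda\downarrow\|y\|_1$ gives $\|x\|_1\le\|y\|_1$. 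If one prefers a closed set, $W$ may be replaced by $\overline W$ (since $V$ is closed, the inclusions in (ii) still hold), but this refinement is unnecessary. The remaining verifications in (i) are routine, following from convexity of $V$ and $C$ and the fact that $\theta\in C$.
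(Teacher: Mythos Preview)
Your proof is correct. Note, however, that the paper does not actually supply a proof of this proposition: it is stated with the attribution \cite{Kr} to M.~Krein and used as a known classical result, without argument. So there is no ``paper's own proof'' to compare against.

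That said, your construction is precisely the standard one found in the literature on ordered normed spaces: passing from the unit ball $V$ to its order-convex hull $W=(V+C)\cap(V-C)$ and taking the Minkowski functional of~$W$ as the new norm. The normality hypothesis is used exactly where it must be---in bounding $W$ by a multiple of $V$---and your verification of monotonicity in step~(iv) is clean. The constants you obtain, $\frac1{2K+1}\|x\|\le\|x\|_1\le\|x\|$, are the usual ones. In short, you have reproduced Krein's argument where the paper merely cites it.
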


\section{Cone metric spaces}

\subsection{Definition and basic properties}
As was already said in Introduction, spaces with ``metrics''
having values in ordered spaces, more general than the set $\RR$
of real numbers, appeared under various names (abstract metric
spaces, $K$-metric spaces, \dots) since the mid-20th century (see,
e.g., \cite{K,kz,Ku,V,Z}). Starting from 2007 and the paper
\cite{HZ}, some version of the following definition has been
usually used.

\begin{definition}\label{Def1}
Let $X$ be a non-empty set and $(E,\tau,\preceq)$ be an OTVS. If a
mapping $d:X\times X\to E$ satisfies the conditions
\begin{itemize}
\item[(i)] $d(x,y)\succeq\theta$ for all $x,y\in X$ and
$d(x,y)=\theta$ if and only if $x=y$; \item[(ii)] $d(x,y)=d(y,x)$
for all $x,y\in X$; \item[(iii)] $d(x,z)\preceq d(x,y)+d(y,z)$ for
all $x,y,z\in X$,
\end{itemize}
then $d$ is called a \textit{cone metric} on $X$ and $(X,d)$ is
called a \textit{cone metric space}.
\end{definition}

Convergent and Cauchy sequences can be introduced in the following
way.

\begin{definition}\label{Def2}
Let $(X,d)$ be a cone metric spaces with the cone metric having
values in $(E,\tau,\preceq)$, where the underlying cone $C$ is
solid. Let $\{x_n\}$ be a sequence in~$X$. Then we say that:
\begin{enumerate}
\item the sequence $\{x_n\}$ converges to $x\in X$ if for each
$c\in\Int C$ there exists $n_0\in\NN$ such that $d(x_n,x)\ll c$
holds for all $n\ge n_0$; \item $\{x_n\}$ is a Cauchy sequence if
for each $c\in\Int C$ there exists $n_0\in\NN$ such that
$d(x_m,x_n)\ll c$ holds for all $m,n\ge n_0$; \item the space
$(X,d)$ is complete if each Cauchy sequence $\{x_n\}$ in it
converges to some $x\in X$.
\end{enumerate}
\end{definition}

Note that, according to \cite{KPRR}, in the previous definition,
equivalent notions are obtained if one replace $\ll$ by $\prec$ or
$\preceq$.

The following crucial observation (which is a consequence of
Proposition \ref{prop2}) shows that most of the problems in cone
metric spaces can be reduced to their standard metric
counterparts.

\begin{proposition}\label{equivalence}
Let $X$ be a non-empty set and $(E,\tau,\preceq_C)$ be an OTVS
with a solid cone~$C$. If $e\in\Int C$ is arbitrary, then
$D(x,y)=\|d(x,y)\|_e$ is a $($standard\/$)$ metric on $X$.
Moreover, a sequence $\{x_n\}$ is Cauchy $($convergent\/$)$ in
$(X,d)$ if and only if it is Cauchy $($convergent\/$)$ in $(X,D)$.
\end{proposition}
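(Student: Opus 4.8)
The plan is to deduce everything from Proposition~\ref{prop2}. Fix $e\in\Int C$ and let $\|\cdot\|_e=p_{[-e,e]}$ be the associated Minkowski functional; by Proposition~\ref{prop2} this is a norm on $E$ that is monotone with respect to $\preceq_C$, the cone $C$ is normal (indeed with normal constant~$1$) w.r.t.\ it, and $\Int C$ is the same set whether computed in $\tau$ or in the $\|\cdot\|_e$-topology. A preliminary remark that will be used repeatedly: positive scalar multiples of interior points of $C$ are again interior points (multiplication by $\lambda>0$ is a homeomorphism and maps $C$ onto $C$), and $\|e\|_e=1$, which follows at once from $e\in[-e,e]$ and $e\notin\lambda[-e,e]$ for $0<\lambda<1$.

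First I would check that $D=\|d(\cdot,\cdot)\|_e$ is a metric. Nonnegativity is clear, and $D(x,y)=0$ forces $d(x,y)=\theta$ since $\|\cdot\|_e$ is a norm, hence $x=y$ by axiom~(i) of Definition~\ref{Def1}; the converse and symmetry are immediate from~(i) and~(ii). For the triangle inequality, axiom~(iii) gives $\theta\preceq d(x,z)\preceq d(x,y)+d(y,z)$, so monotonicity of $\|\cdot\|_e$ yields $\|d(x,z)\|_e\le\|d(x,y)+d(y,z)\|_e$, and subadditivity of the norm finishes it.

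Next, the convergence equivalence. If $x_n\to x$ in $(X,d)$, then for any $\varepsilon>0$ the element $c=\tfrac{\varepsilon}{2}e$ lies in $\Int C$, so there is $n_0$ with $d(x_n,x)\ll c$ for $n\ge n_0$, i.e.\ $\theta\preceq d(x_n,x)\preceq\tfrac{\varepsilon}{2}e$; monotonicity then gives $D(x_n,x)=\|d(x_n,x)\|_e\le\tfrac{\varepsilon}{2}\|e\|_e=\tfrac{\varepsilon}{2}<\varepsilon$. Conversely, suppose $x_n\to x$ in $(X,D)$ and let $c\in\Int C$ be arbitrary. Since $\Int C$ is open in the $\|\cdot\|_e$-topology, there is $r>0$ with the open ball $B_{\|\cdot\|_e}(c,r)\subseteq\Int C$; choosing $n_0$ so that $\|d(x_n,x)\|_e<r$ for $n\ge n_0$, we get $c-d(x_n,x)\in B_{\|\cdot\|_e}(c,r)\subseteq\Int C$, i.e.\ $d(x_n,x)\ll c$, for all $n\ge n_0$. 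The argument for Cauchy sequences is verbatim the same, with $d(x_m,x_n)$ in place of $d(x_n,x)$ and the indices ranging over $m,n\ge n_0$.

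Nothing here is really an obstacle once Proposition~\ref{prop2} is available; the only points that require care are the use of the identity $\Int_\tau C=\Int_{\|\cdot\|_e}C$ and the fact that $\|\cdot\|_e$ is genuinely monotone, since both implications ``$d(x_n,x)\ll c\Rightarrow D(x_n,x)$ small'' and ``$D(x_n,x)$ small $\Rightarrow d(x_n,x)\ll c$'' amount to translating the relation $\ll$ into $\|\cdot\|_e$-balls and back.
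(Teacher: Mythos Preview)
Your proof is correct and follows exactly the approach the paper indicates: the paper states Proposition~\ref{equivalence} simply as ``a consequence of Proposition~\ref{prop2}'' without supplying details, and your argument is a faithful and careful unpacking of that claim, using the monotonicity of $\|\cdot\|_e$ for the triangle inequality and the identity $\Int_\tau C=\Int_{\|\cdot\|_e}C$ for the two directions of the convergence/Cauchy equivalence.
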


We will show in Section \ref{Sec4} how this approach simplifies
most of the proofs concerning fixed points of mappings.

Note that it is clear that the cone metric with values in
$(E,\|\cdot\|_e,\preceq)$ is continuous (as a function of two
variables). Also, Sandwich Theorem holds (since the cone $C$ is
normal in the new norm).

\subsection{Completion and Cantor's intersection theorem}

Most of standard notions from the setting of metric spaces, like
accumulation points of sequences, open and closed balls, open and
closed subsets, etc., are introduced in the usual way. Also,
standard properties (e.g., that closed balls are closed subsets)
can be easily deduced, based on Proposition \ref{prop2}.

As a sample, we state the following theorem on the completion of a
cone metric space and we note that, with the mentioned approach,
its proof is much easier than in \cite{Abdel}.

\begin{theorem}
Let $(X,d)$ be a cone metric space over an OTVS
$(E,\tau,\preceq_C)$ with a solid cone. Then there exists a
complete cone metric space $(\widetilde{X},\widetilde{d})$ over an
ordered normed space $(E,\|\cdot\|,\preceq_C)$ with a solid and
normal cone, containing a dense subset
$(\widetilde{X}^{*},\widetilde{d}^{*})$, isometric with $(X,d)$.
\end{theorem}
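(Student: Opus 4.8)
The plan is to transport the problem to the normed setting via Proposition \ref{prop2} and then invoke the classical completion of a metric space. First I would fix some $e\in\Int C$ and replace the topology $\tau$ by the norm $\|\cdot\|_e$; by Proposition \ref{prop2} the cone $C$ is normal (in fact monotone) and solid with respect to this norm, and by Proposition \ref{equivalence} the function $D(x,y)=\|d(x,y)\|_e$ is a standard metric on $X$ which has the same Cauchy and convergent sequences as $(X,d)$. So it suffices to build a completion that carries an $E$-valued cone metric, not merely a real-valued one.

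The key step is therefore to complete $(X,d)$ \emph{as a cone metric space}, mimicking the classical Cantor construction but keeping the values in $E$. I would let $\widetilde{X}$ be the set of Cauchy sequences $\{x_n\}$ in $(X,d)$ modulo the equivalence $\{x_n\}\sim\{y_n\}\iff d(x_n,y_n)\to\theta$ (equivalently, $D(x_n,y_n)\to 0$). For two such classes $\xi=[\{x_n\}]$, $\eta=[\{y_n\}]$ one wants to define
\[
\widetilde{d}(\xi,\eta)=\lim_{n\to\infty} d(x_n,y_n),
\]
where the limit is taken in $(E,\|\cdot\|_e)$. To justify this I must show that $\{d(x_n,y_n)\}$ is a Cauchy sequence in the Banach space obtained by completing $(E,\|\cdot\|_e)$ — this uses the triangle inequality (iii) together with monotonicity of $\|\cdot\|_e$ to get $\|d(x_n,y_n)-d(x_m,y_m)\|_e\le \|d(x_n,x_m)\|_e+\|d(y_n,y_m)\|_e$ — and that the limit is independent of the representatives. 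One then checks that $\widetilde{d}$ satisfies (i)--(iii): symmetry and the triangle inequality pass to the limit because $C$ is closed in $\|\cdot\|_e$, and $\widetilde d(\xi,\eta)=\theta$ forces $D$-equivalence of the representatives, hence $\xi=\eta$. The embedding $\widetilde{X}^{*}$ consists of the classes of constant sequences, and $x\mapsto[\{x,x,\dots\}]$ is an isometry (indeed $\widetilde d$ restricted to it equals $d$). Density of $\widetilde X^{*}$ and completeness of $(\widetilde X,\widetilde d)$ are then exactly the usual diagonal arguments for metric completions, reread through Proposition \ref{equivalence}: a $\widetilde d$-Cauchy sequence in $\widetilde X$ is a $\widetilde D$-Cauchy sequence, where $\widetilde D(\xi,\eta)=\|\widetilde d(\xi,\eta)\|_e$, and $\widetilde D$ is the usual completion metric of $(X,D)$, which is complete.

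The one point needing care — and the main obstacle — is the \emph{codomain}: the statement asks for $\widetilde d$ to take values in an ordered \emph{normed} space with a solid normal cone, so I must ensure the limits $\lim_n d(x_n,y_n)$ actually exist in such a space. The clean way is to pass to the norm-completion $\widehat{E}$ of $(E,\|\cdot\|_e)$, extend the order by taking $\widehat{C}=\overline{C}^{\,\widehat E}$ (the closure of $C$ in $\widehat E$); since $\|\cdot\|_e$ is monotone on $E$, its extension to $\widehat E$ is monotone on $\widehat C$, so $\widehat C$ is again a solid normal cone (its interior is nonempty because $[-e,e]$ remains a neighbourhood of $\theta$), and one verifies $\widehat C\cap(-\widehat C)=\{\theta\}$ using monotonicity. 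All the $\widetilde d$-values then lie in $\widehat C\subset\widehat E$, giving the required ordered normed space. The remaining verifications are the routine metric-completion bookkeeping, made painless by Proposition \ref{equivalence}, which is precisely why this argument is shorter than the one in \cite{Abdel}.
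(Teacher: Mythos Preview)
The paper does not actually supply a proof of this theorem; it only states it and remarks that, with the Minkowski-functional approach of Proposition~\ref{prop2}, the argument becomes much easier than the one in~\cite{Abdel}. Your proposal carries out precisely that programme---pass to the monotone norm $\|\cdot\|_e$, redo the classical Cantor completion with $E$-valued distances, and use Proposition~\ref{equivalence} to transfer Cauchy and convergence statements back and forth---so it is exactly the intended route, made explicit.

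Your remark about the codomain is a genuine refinement that the paper glosses over. The theorem, as written, asserts that $\widetilde d$ takes values in $(E,\|\cdot\|,\preceq_C)$, but nothing in the hypotheses forces $(E,\|\cdot\|_e)$ to be complete, so the limits $\lim_n d(x_n,y_n)$ need not exist in $E$ itself. Your fix---pass to the norm-completion $\widehat E$ and take $\widehat C=\overline{C}^{\,\widehat E}$---is the correct one, and your verifications that $\widehat C$ is again a solid cone with monotone norm (hence normal with constant~$1$) are sound; the argument that $\widehat C\cap(-\widehat C)=\{\theta\}$ via monotonicity works because $\theta\preceq c_n\preceq c_n+c_n'$ gives $\|c_n\|_e\le\|c_n+c_n'\|_e\to0$. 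Strictly speaking this produces a completion over $(\widehat E,\|\cdot\|_e,\preceq_{\widehat C})$ rather than over $E$, so the paper's formulation is a minor imprecision that you have detected and repaired.
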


Now, we present a proof of the Cantor's intersection theorem in
the setting of cone metric spaces.

\begin{theorem}\label{Cantor}
Let $(X,d)$ be a cone metric space over an OTVS
$(E,\tau,\preceq_C)$ with a solid cone. Then $(X,d)$ is complete
if and only if every decreasing sequence $\{B_n\}$ of non-empty
closed balls in $X$ for which the sequence of diameters tends to
$\theta$ as $n\to\infty$, has a non-empty intersection, and more
precisely, there exists a point $x\in X$ such that
$\bigcap_{n=1}^{\infty}B_n=\{x\}$.
\end{theorem}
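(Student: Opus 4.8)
The plan is to transfer the statement to the standard metric space $(X,D)$, where $D(x,y)=\|d(x,y)\|_e$ for a fixed $e\in\Int C$ as in Proposition \ref{equivalence}, and then invoke the classical Cantor intersection theorem. First I would record two translation facts. Fact (a): every cone-metric closed ball $\overline{B}_d(a,r)=\{\,y\in X\mid d(a,y)\preceq r\,\}$ is a closed subset of $(X,D)$; indeed $d$ is $\|\cdot\|_e$-continuous and $C$ is closed, so $\overline{B}_d(a,r)=\{\,y\mid r-d(a,y)\in C\,\}$ is the preimage of a closed set under a continuous map. Fact (b): for a sequence $\{A_n\}$ of subsets of $X$ one has ``$\diam A_n\to\theta$'' --- meaning that for every $c\in\Int C$ there is $n_0$ with $d(x,y)\ll c$ for all $x,y\in A_n$ and $n\ge n_0$ --- if and only if $\diam_D A_n\to 0$; this follows from the definition of the Minkowski norm $\|\cdot\|_e=p_{[-e,e]}$ together with the facts that $\lambda e\in\Int C$ for every $\lambda>0$ and that $\Int C$ is open. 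In particular, if $d(x,y)\preceq\lambda_n e$ for all $x,y\in A_n$ with $\lambda_n\to 0$, then $\diam A_n\to\theta$.

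For the forward implication, assume $(X,d)$ is complete, hence, by Proposition \ref{equivalence}, $(X,D)$ is complete. Let $\{B_n\}$ be a decreasing sequence of non-empty cone-metric closed balls with $\diam B_n\to\theta$. By facts (a) and (b) this is a decreasing sequence of non-empty closed subsets of the complete metric space $(X,D)$ with $\diam_D B_n\to 0$, so the classical Cantor theorem produces a single point $x$ with $\bigcap_{n=1}^{\infty}B_n=\{x\}$.

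For the converse, assume the stated intersection property and let $\{y_n\}$ be a Cauchy sequence in $(X,D)$ --- equivalently, in $(X,d)$, by Proposition \ref{equivalence}. Pass to a subsequence $\{y_{n_k}\}$ with $D(y_{n_k},y_{n_{k+1}})<2^{-k-1}$, so that $d(y_{n_k},y_{n_{k+1}})\preceq 2^{-k-1}e$, and set $B_k=\overline{B}_d\bigl(y_{n_k},\,2^{-k+1}e\bigr)$. Each $B_k$ is non-empty (it contains $y_{n_k}$), and since $2^{-k-1}+2^{-k}\le 2^{-k+1}$ the triangle inequality gives $B_{k+1}\subseteq B_k$; moreover $d(x,z)\preceq 2^{-k+2}e$ for $x,z\in B_k$, so $\diam B_k\to\theta$ by fact (b). By hypothesis $\bigcap_{k}B_k=\{x\}$ for some $x\in X$, and then $d(y_{n_k},x)\preceq 2^{-k+1}e$ forces $y_{n_k}\to x$ in $(X,d)$, hence in $(X,D)$; a $D$-Cauchy sequence with a $D$-convergent subsequence is itself $D$-convergent, so $(X,D)$, and therefore $(X,d)$, is complete.

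The only genuinely delicate points are the translation facts (a) and (b); once they are available the rest is the classical argument. The main thing to get right is the precise reading of ``the sequence of diameters tends to $\theta$'' for cone-metric balls and its equivalence, through the monotone norm $\|\cdot\|_e$ supplied by Proposition \ref{prop2}, with $\diam_D\to 0$; it is exactly this equivalence that dictates the choice of radii $2^{-k+1}e$ in the converse direction. Everything else --- continuity of $d$ and closedness of $C$ (hence of the balls), the implication $a\preceq b\ll c\Rightarrow a\ll c$, and the passage from a convergent subsequence to a convergent Cauchy sequence --- is routine.
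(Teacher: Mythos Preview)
Your proof is correct and rests on the same reduction as the paper's: use Proposition~\ref{prop2} (and its consequence, Proposition~\ref{equivalence}) to pass to a setting where the cone is normal and the associated metric $D=\|d(\cdot,\cdot)\|_e$ is available. The execution differs in two places. For the forward implication the paper works directly in the cone setting (the centers of the balls form a Cauchy sequence, the limit lies in every $B_n$, uniqueness via the triangle inequality), whereas you transfer the data to $(X,D)$ via your Facts~(a) and~(b) and invoke the classical Cantor theorem as a black box; this is more modular and avoids repeating the standard argument. For the converse the paper argues by contradiction (a non-convergent Cauchy sequence yields nested closed balls with empty intersection), while you argue directly (a Cauchy sequence yields nested closed balls whose single intersection point is the limit); the underlying construction of the balls, with halving radii along a fast subsequence, is essentially the same in both. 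One minor point of interpretation: the paper tacitly reads ``the sequence of diameters tends to $\theta$'' as ``the sequence of radii tends to $\theta$'', while you read it as the $c$-sequence condition on pairwise distances; your Fact~(b) shows these are interchangeable for the purposes of the proof, so no harm is done.
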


\begin{proof}
According to Propositions \ref{prop2} and \ref{lemma1}, without
loss of generality, we can assume that $(X,d)$ is a cone metric
space over an ordered normed space $(E,\|\cdot\|,\preceq_C)$ with
the cone $C$ being solid and normal. Moreover, we can assume that
the normal constant of $C$ is equal to~$1$ (i.e., the norm is
monotone w.r.t.~$C$).

Assume that $(X,d)$ is a complete cone metric space, and let
$B_n=B[x_n,c_n]$, $n\in\mathbb{N}$, be the sequence of non-empty,
closed balls in $X$ with the properties: $B_{n+1}\subset B_n$,
$n\in\mathbb{N}$, and $c_n\rightarrow\theta$ as
$n\rightarrow\infty$. We first show that $\{x_n\}$ is a Cauchy
sequence.

If $m>n$, we have $x_m\in B_m\subset B_n$ which implies that
$d(x_m,x_n)\preceq c_n$. Since $c_n\rightarrow \theta$ as
$n\rightarrow\infty$, it follows that $d(x_m,x_n)\to\theta$ as
$n\to\infty$, and consequently $\{x_n\}$ is a Cauchy sequence.

The completeness of $(X, d)$ implies the existence of $x\in X$
such that $x_n\rightarrow x$ as $n\rightarrow\infty$. Since
$x_m\in B_n$ for all $m\geq n$, and $B_n$ is a closed subset of
$X$, it follows that $x\in B_n$ for all $n\in \mathbb{N}$, i.e.,
$x\in\bigcap_{n=1}^{\infty}B_n$.

Suppose that there exists $y\in\bigcap_{n=1}^{\infty}B_n$ such
that $y\neq x$. We thus get
\[d(x,y)\preceq d(x, x_n) + d(x_n, y)\preceq 2c_n,\]
which gives $d(x,y)\preceq\theta$ and, as a consequence, we obtain
$x = y$, a contradiction. Hence, $\bigcap_{n=1}^{\infty}B_n=
\{x\}$.

Conversely, to obtain a contradiction, suppose that $(X,d)$ is not
a complete cone metric space. Then there exists a Cauchy sequence
$\{x_n\}$  which does not converge in $X$.

We construct  a strictly increasing sequence of positive integers
$\{n_i\}$ in the following way. If $c_1\gg \theta$, then there
exists $n_1\in\mathbb{N}$ such that $m > n_1$ implies $d(x_m,
x_{n_1})\ll c_1$. Now, for $c_2 =\frac{c_1}{2}$ there exists
$n_2>n_1$ such that $d(x_m, x_{n_2})\ll c_2$ holds for all
$m>n_2$. We continue in this manner.  For $c_i=\frac{c_{i-1}}{2}$,
there exists $n_i>n_{i-1}$ such that $m>n_i$ implies $d(x_m,
x_{n_i})\ll c_i$.

Let us consider the sequence $B_i=B[x_{n_i},2c_i]$,
$i\in\mathbb{N}$, such that
$c_i=\frac{c_1}{2^{i-1}}\rightarrow\theta$  as
$i\rightarrow\infty$. We first show that  $B_{i+1}\subset B_i$ for
all $i\in\mathbb{N}$. From $y\in B_{i+1}$, it follows that
$d(y,x_{n_{i+1}})\preceq2c_{i+1}$, hence that
\[d(y,x_{n_i})\preceq d(y, x_{n_{i+1}}) +
d(x_{n_{i+1}},x_{n_i})\preceq 2c_{i+1}+c_i=2\frac{c_i}{2} + c_i
=2c_i,\] and finally that $y\in B[x_{n_i}, 2c_i]$. We thus get
$y\in B_i$, which is the desired conclusion.

Let us prove that $\bigcap_{i=1}^{\infty}B_i=\emptyset$. Indeed,
if there exists an $x\in\bigcap_{i=1}^{\infty}B_i$, then $x\in
B_i$ for all $i\in\mathbb{N}$ and thus $d(x,x_i)\preceq2c_i$,
$i\in\mathbb{N}$. For $m > n_i$, we have
\[
d(x,x_m)\preceq d(x, x_{n_i}) + d(x_{n_i},x_m)\preceq2c_i+c_i=3c_i.
\]
The normality of the cone $C$ gives $d(x,x_m)\to\theta$ as
$m\to\infty$ (since $m>n_i$). This contradicts the fact that the
Cauchy sequence $\{x_n\}$ does not converge in $X$.
\end{proof}

Comparing the previous proof with the one of \cite[Theorem
3.10]{JK}), we see that our result is more general and with a
shorter proof.

\section{Fixed point results in cone metric spaces}\label{Sec4}

\subsection{Some fixed point results}

Most of the papers from the present list of references dealt with
some (common) fixed point problems in cone metric spaces. However,
the majority of the obtained results are in fact direct
consequences of the corresponding results from the standard metric
spaces. We will show this just on certain examples, but it will be
clear that the same procedure can be done in most of the other
cases.

Thus, let $(X,d)$ be a cone metric space over an OTVS
$(E,\tau,\preceq)$ with a solid cone~$C$. Let $f,g:X\to X$ be two
self-mappings satisfying $fX\subset gX$, and assume that one of
these two subsets of $X$ is complete. Consider the following
contractive conditions:

\begin{enumerate}
\item (Banach) $d(fx,fy)\preceq \lambda d(gx,gy)$,
$\lambda\in(0,1)$;

\item (Kannan) $d(fx,fy)\preceq\lambda d(gx,fx)+\mu d(gy,fy)$,
$\lambda,\mu\ge0$, $\lambda+\mu<1$;

\item (Chatterjea) $d(fx,fy)\preceq\lambda d(gx,fy)+\mu d(gy,fx)$,
$\lambda,\mu\ge0$, $\lambda+\mu<1$;

\item (Reich) $d(fx,fy)\preceq\lambda d(gx,gy)+\mu d(gx,fx)+\nu
d(gy,fy)$, $\lambda,\mu,\nu\ge0$, $\lambda+\mu+\nu<1$;

\item (Zamfirescu) one of the conditions
\[
\left\{\begin{aligned} d(fx,fy)&\preceq \lambda
d(gx,gy),\;\;\lambda\in(0,1)\\
d(fx,fy)&\preceq\lambda d(gx,fx)+\mu d(gy,fy),\;\;
\lambda,\mu\ge0,\; \lambda+\mu<1\\
d(fx,fy)&\preceq\lambda d(gx,fy)+\mu d(gy,fx),\;\;
\lambda,\mu\ge0,\; \lambda+\mu<1; \end{aligned}\right.
\]

\item (Hardy-Rogers) $d(fx,fy)\preceq \lambda d(gx,gy)+\mu
d(gx,fx)+ \nu d(gy,fy)+\xi d(gx,fy)+\pi d(gy,fx)$,
$\lambda,\mu,\nu,\xi,\pi\ge0$, $\lambda+\mu+\nu+\xi+\pi<1$;

\item (\'Ciri\'c) $d(fx,fy)\preceq\lambda u(x,y)$, for some
$u(x,y)$ belonging to one of the following sets
\begin{align*}
&\left\{d(gx,gy),\dfrac{d(gx,fx)+d(gy,fy)}2,\dfrac{d(gx,fy)+d(gy,fx)}2\right\}\\
&\left\{d(gx,gy),d(gx,fx),d(gy,fy),\dfrac{d(gx,fy)+d(gy,fx)}2\right\}
\end{align*}
where $\lambda\in(0,1)$.
\end{enumerate}

\begin{theorem}\label{CFPT}
If one of the conditions $(1)$--$(7)$ is satisfied for all $x,y\in
X$, then the mappings $f$ and $g$ have a unique point of
coincidence. Moreover, if $f$ and $g$ are weakly compatible, then
they have a unique common fixed point in~$X$.
\end{theorem}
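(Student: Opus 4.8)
The plan is to reduce everything to the standard metric setting via Proposition \ref{equivalence} and then invoke the classical coincidence-point theorems. First I would fix an arbitrary $e\in\Int C$ and set $D(x,y)=\|d(x,y)\|_e$, which by Proposition \ref{equivalence} is a genuine metric on $X$, with the same Cauchy and convergent sequences as $(X,d)$; in particular the hypothesis ``one of $fX$, $gX$ is complete'' transfers verbatim to $(X,D)$. The key observation is that, after passing to the norm $\|\cdot\|_e$ supplied by Proposition \ref{prop2}, the cone $C$ is normal with normal constant $1$, so the norm is \emph{monotone}: $\theta\preceq a\preceq b$ implies $\|a\|_e\le\|b\|_e$. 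Applying $\|\cdot\|_e$ to each of the cone-contractive conditions (1)--(7) and using monotonicity together with the triangle inequality for $\|\cdot\|_e$ then yields the corresponding \emph{scalar} contractive condition for $D$; for instance, condition (1) gives $D(fx,fy)\le\lambda D(gx,gy)$, and (6) gives $D(fx,fy)\le\lambda D(gx,gy)+\mu D(gx,fx)+\nu D(gy,fy)+\xi D(gx,fy)+\pi D(gy,fx)$ with the same constants, hence the same summability restriction. For the \'Ciri\'c-type condition (7) one notes that if $u(x,y)$ is one of the listed cone expressions then $\|u(x,y)\|_e$ is dominated by the corresponding scalar expression in the $D$-distances, so $D(fx,fy)\le\lambda\max\{\dots\}$ holds.

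Next I would quote the standard metric coincidence/common-fixed-point theorems for a pair of mappings $f,g$ with $fX\subset gX$ and one of $fX,gX$ complete, under each of the Banach, Kannan, Chatterjea, Reich, Zamfirescu, Hardy-Rogers and \'Ciri\'c conditions (these are by now textbook results, obtained by the Jungck-type iteration $y_n=fx_n=gx_{n+1}$). Each of these gives a unique point of coincidence $w=fz=gz$ in $(X,D)$, and hence in $(X,d)$, since the two spaces have the same underlying set and the coincidence equation $d(fz,gz)=\theta \iff D(fz,gz)=0$ is identical. Finally, for the common-fixed-point assertion I would invoke the classical lemma that two weakly compatible self-mappings with a unique point of coincidence have a unique common fixed point: if $w=fz=gz$ then $fw=f(gz)=g(fz)=gw$, so $fw$ is again a point of coincidence, whence $fw=w$ by uniqueness, giving $fw=gw=w$, and uniqueness of the common fixed point follows from uniqueness of the point of coincidence.

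The only genuinely non-routine point is verifying that \emph{every} one of the seven cone conditions collapses to its scalar analogue under $\|\cdot\|_e$; this is where monotonicity of the norm (normal constant $1$), rather than mere normality, is essential, since a normal constant $K>1$ would degrade the contraction constants and could push a sum like $\lambda+\mu+\nu+\xi+\pi$ past $1$. Everything else is bookkeeping: one writes out the finitely many cases, applies $\|\cdot\|_e$, uses $\theta\preceq a\preceq b\Rightarrow\|a\|_e\le\|b\|_e$ and $\|a+b\|_e\le\|a\|_e+\|b\|_e$, and reads off the classical hypothesis. I would present the Banach case (1) in full as a template and then remark that cases (2)--(7) follow by the same substitution, referring the reader to the corresponding standard metric references for the existence and uniqueness of the point of coincidence.
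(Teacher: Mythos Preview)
Your proposal is correct and follows essentially the same route as the paper: fix $e\in\Int C$, pass to the metric $D(x,y)=\|d(x,y)\|_e$ via Propositions \ref{prop2} and \ref{equivalence}, use monotonicity of $\|\cdot\|_e$ (normal constant $1$) together with subadditivity to convert each of the cone conditions $(1)$--$(7)$ into its scalar analogue $(1')$--$(7')$, and then appeal to the classical metric coincidence results (the paper cites \cite{BER} and \cite{Jungck}). Your write-up is in fact more explicit than the paper's on the crucial point that the normal constant must equal $1$ so that the contraction parameters are preserved, and your treatment of the weak-compatibility step is exactly the standard argument the paper alludes to.
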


\begin{proof}
Let $e\in\Int C$ be arbitrary. Based on Propositions \ref{prop2},
\ref{lemma1} and \ref{equivalence}, consider the metric space
$(X,D)$, where $D(x,y)=\|d(x,y)\|_e$. Then, the self-mappings
$f,g:X\to X$ satisfy one of the following contractive conditions:
\begin{enumerate}
\item[$(1')$] $D(fx,fy)\leq \lambda D(gx,gy)$, $\lambda\in(0,1)$;

\item[$(2')$] $D(fx,fy)\leq\lambda D(gx,fx)+\mu D(gy,fy)$,
$\lambda,\mu\ge0$, $\lambda+\mu<1$;

\item[$(3')$] $D(fx,fy)\leq\lambda D(gx,fy)+\mu D(gy,fx)$,
$\lambda,\mu\ge0$, $\lambda+\mu<1$;

\item[$(4')$] $D(fx,fy)\leq\lambda D(gx,gy)+\mu D(gx,fx)+\nu
D(gy,fy)$, $\lambda,\mu,\nu\ge0$, $\lambda+\mu+\nu<1$;

\item[$(5')$] one of the conditions
\[
\left\{\begin{aligned} D(fx,fy)&\leq \lambda
D(gx,gy),\;\;\lambda\in(0,1)\\
D(fx,fy)&\leq\lambda D(gx,fx)+\mu D(gy,fy),\;\;
\lambda,\mu\ge0,\; \lambda+\mu<1\\
D(fx,fy)&\leq\lambda D(gx,fy)+\mu D(gy,fx),\;\; \lambda,\mu\ge0,\;
\lambda+\mu<1; \end{aligned}\right.
\]

\item[$(6')$] $D(fx,fy)\leq \lambda D(gx,gy)+\mu D(gx,fx)+ \nu
D(gy,fy)+\xi D(gx,fy)+\pi D(gy,fx)$,
$\lambda,\mu,\nu,\xi,\pi\ge0$, $\lambda+\mu+\nu+\xi+\pi<1$;

\item[$(7')$] one of the conditions
\begin{align*}
D(fx,fy)&\leq\lambda \left\{D(gx,gy),\dfrac{D(gx,fx)+D(gy,fy)}2,\dfrac{D(gx,fy)+D(gy,fx)}2\right\}\\
D(fx,fy)&\leq\lambda \left\{D(gx,gy),D(gx,fx),D(gy,fy),\dfrac{D(gx,fy)+D(gy,fx)}2\right\}
\end{align*}
where $\lambda\in(0,1)$.
\end{enumerate}

It is well known (see, e.g., \cite{BER} for the case $g=id_X$; the
general cases are similar) that, in each of these cases, it
follows that the mappings $f$ and $g$ have a unique point of
coincidence. The conclusion in the case of weak compatibility is
also standard (see, e.g., \cite{Jungck}).
\end{proof}

For \'Ciri\'c's quasicontraction (see also \cite{BER}), but for a
single mapping, the following cone metric version can be proved in
a similar way.

\begin{theorem}
Let $(X,d)$ be a complete cone metric space over an OTVS
$(E,\tau,\preceq)$ with a solid cone~$C$. Let $f:X\to X$ be a
self-mapping such that there exists $\lambda\in(0,1)$ and, for all
$x,y\in X$ there exists
\[
u(x,y)\in\left\{d(x,y),d(x,fx),d(y,fy),d(x,fy),d(y,fx)\right\}
\]
satisfying  $d(fx,fy)\preceq\lambda u(x,y)$. Then $f$ has a unique
fixed point in~$X$.
\end{theorem}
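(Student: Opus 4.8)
The plan is to reduce the cone-metric statement to the classical \'Ciri\'c quasicontraction theorem via the same device used in Theorem \ref{CFPT}. First I would fix an arbitrary $e\in\Int C$ and invoke Propositions \ref{prop2}, \ref{lemma1} and \ref{equivalence}: the cone $C$ can be taken to be solid and normal with respect to the monotone norm $\|\cdot\|_e$, and $D(x,y)=\|d(x,y)\|_e$ is a genuine metric on $X$ for which Cauchy and convergent sequences coincide with those of $(X,d)$; in particular $(X,D)$ is complete since $(X,d)$ is.

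The crucial step is to transfer the contractive condition. Suppose $d(fx,fy)\preceq\lambda u(x,y)$ for some $u(x,y)$ in the five-element set $S(x,y)=\{d(x,y),d(x,fx),d(y,fy),d(x,fy),d(y,fx)\}$. Since the cone is normal with constant $1$ in the norm $\|\cdot\|_e$ and $\theta\preceq d(fx,fy)\preceq\lambda u(x,y)$, monotonicity of the norm yields
\[
D(fx,fy)=\|d(fx,fy)\|_e\le \|\lambda u(x,y)\|_e=\lambda\,\|u(x,y)\|_e=\lambda\, v(x,y),
\]
where $v(x,y)=\|u(x,y)\|_e$ belongs to $\{D(x,y),D(x,fx),D(y,fy),D(x,fy),D(y,fx)\}$. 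Hence $f$ is a \'Ciri\'c quasicontraction on the complete metric space $(X,D)$, and the classical theorem (see, e.g., \cite{BER}) furnishes a unique fixed point $x^{*}\in X$ of $f$. Since the point sets and the notions of fixed point do not depend on which of the equivalent structures we use, $x^{*}$ is the unique fixed point of $f$ in $(X,d)$ as well.

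The step requiring the most care is making sure the \emph{pointwise} choice of $u(x,y)$ (\'Ciri\'c's condition allows a different element of $S(x,y)$ for each pair $(x,y)$, not a uniform one) is preserved by the norm: this is precisely what the displayed computation above checks, and it works because $\|\cdot\|_e$ is monotone, so the inequality $\theta\preceq a\preceq b$ already gives $\|a\|_e\le\|b\|_e$ without any normal-constant loss. No other obstacle arises, since everything else is a verbatim application of earlier propositions and of the standard metric result.
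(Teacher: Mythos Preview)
Your proposal is correct and is exactly the approach the paper intends: the paper does not spell out a proof for this theorem but simply states that ``the following cone metric version can be proved in a similar way'' to Theorem~\ref{CFPT}, i.e., by passing to the monotone norm $\|\cdot\|_e$ and invoking the classical \'Ciri\'c quasicontraction result from~\cite{BER}. Your careful remark about the pointwise choice of $u(x,y)$ being preserved under the monotone norm is precisely the one subtlety that needs checking, and you handle it correctly.
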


\subsection{Results that cannot be obtained in the mentioned way}

Certain fixed point problems in cone metric spaces cannot be
treated as in the previous subsection. This is always the case
when the given contractive condition cannot be transformed to an
appropriate condition in the corresponding metric space $(X,D)$.
As an example of this kind we recall a Boyd-Wong type result.

Let $(E,\tau,\preceq_C)$ be an OTVS with a solid cone. A mapping
$\varphi:C\to C$ is called a \textit{comparison function} if:
(i)~$\varphi$ is nondecreasing w.r.t.\ $\preceq$;
(ii)~$\varphi(\theta)=\theta$ and $\theta\prec\varphi(c)\prec c$
for $c\in C\setminus\{\theta\}$; (iii)~$c\in\Int C$ implies
$c-\varphi(c)\in\Int C$; (iv)~if $c\in C\setminus\{\theta\}$ and
$e\in\Int C$, then there exists $n_0\in\NN$ such that
$\varphi^n(c)\ll e$ for each $n\ge n_0$.

\begin{theorem}{\rm\cite[Theorem 2.1]{Ar}}
Let $(X,d)$ be a complete cone metric space over
$(E,\tau,\preceq_C)$ with a solid cone. Let the mappings $f,g:X\to
X$ be weakly compatible and suppose that for some comparison
function $\varphi:C\to C$ and for all $x,y\in X$ there exists
$u\in\{d(gx,gy),d(gx,fx),d(gy,fy)\}$, such that
$d(fx,fy)\preceq\varphi(u)$. Then $f$ and $g$ have a unique common
fixed point in~$X$.
\end{theorem}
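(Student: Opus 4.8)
The plan is to run the Jungck iteration and to imitate Matkowski's classical argument for $\varphi$-contractions; the reason this must be done directly is that the comparison function $\varphi$ cannot be transported to a real-valued function, so every estimate has to be carried out inside the cone. As in Theorem~\ref{CFPT} I take $fX\subseteq gX$ with one of these two sets complete, and, using Proposition~\ref{prop2}, I first replace $\tau$ by the monotone norm $\|\cdot\|_e$ for a fixed $e\in\Int C$, so that $(E,\|\cdot\|,\preceq_C)$ becomes an ordered normed space with $C$ solid and normal. Under this reduction, axiom~(iv) of a comparison function says exactly that $\varphi^n(c)\to\theta$ in norm for every $c\in C$: if $\theta\preceq z_n\preceq\tfrac1k e$ then $\|z_n\|\le\tfrac1k\|e\|$ by monotonicity, while conversely every norm-null sequence is a $c$-sequence, as recalled in Section~\ref{Sec2}.

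First I would fix $x_0\in X$, choose $x_{n+1}$ with $gx_{n+1}=fx_n$, and set $y_n=fx_n=gx_{n+1}$, $\delta_n=d(y_n,y_{n+1})$, $c_0=\delta_0$. For $n\ge1$, feeding $d(fx_n,fx_{n+1})$ into the hypothesis, the element $u$ lies in $\{d(y_{n-1},y_n),\delta_n\}=\{\delta_{n-1},\delta_n\}$; if $u=\delta_n$ then $\delta_n\preceq\varphi(\delta_n)$, which by (ii) and antisymmetry of $\preceq$ forces $\delta_n=\theta$, hence $gx_{n+1}=fx_{n+1}$ and $x_{n+1}$ is a coincidence point, so we are in the trivial situation treated at the end. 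Otherwise $\delta_n\preceq\varphi(\delta_{n-1})$ for all $n\ge1$, whence $\delta_n\preceq\varphi^n(c_0)$ since $\varphi$ is nondecreasing, and therefore $\delta_n\to\theta$ in norm by (iv); in particular $\{\delta_n\}$ is a $c$-sequence for every $c\in\Int C$. (If $c_0=\theta$ or $d(gx_0,fx_0)=\theta$ there is a coincidence point outright.)

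The core of the argument is that $\{y_n\}$ is Cauchy, and here I would adapt Matkowski. Fix $c\in\Int C$. The lemma I need is the cone counterpart of ``$\varphi(\varepsilon^{+})\le\varepsilon$'': there is $h\in\Int C$ with $h\preceq c$ and
\[
\varphi(c+h)\preceq c .
\]
I expect \emph{this} to be the main obstacle. One should obtain it from $\varphi(c)\ll c$ (axiom (iii)) together with the monotonicity of $\varphi$ and of the norm and, crucially, axiom (iv): along $h_k=\tfrac1k\bigl(c-\varphi(c)\bigr)\in\Int C$ the elements $\varphi(c+h_k)$ are $\preceq$-decreasing and squeezed between $\varphi(c)$ and $c+h_k$, and if none of them were $\preceq c$ one would violate $\varphi^n(\cdot)\to\theta$; this is precisely where the cone order is indispensable and cannot be routed through $\RR$, which is why the theorem falls outside the reduction used for Theorem~\ref{CFPT}. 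Granting the lemma, I would choose $N$ with $\delta_n\ll h$ for all $n\ge N$ and prove by induction on $p\ge1$ that $d(y_N,y_{N+p})\ll c+h$. For $p=1$ this is $\delta_N\ll h$. For the inductive step, $d(y_N,y_{N+p+1})\preceq\delta_N+d(y_{N+1},y_{N+p+1})$ and $d(y_{N+1},y_{N+p+1})=d(fx_{N+1},fx_{N+p+1})\preceq\varphi(u)$ with $u\in\{d(y_N,y_{N+p}),\delta_N,\delta_{N+p}\}$; each of these three is $\ll c+h$ by the inductive hypothesis and the choice of $N$, hence $\varphi(u)\preceq\varphi(c+h)\preceq c$ and $d(y_N,y_{N+p+1})\preceq\delta_N+c\ll h+c$. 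Thus $d(y_N,y_{N+p})\ll c+h\preceq 2c$ for all $p$, the triangle inequality gives $d(y_m,y_n)\ll 4c$ for all $m,n\ge N$, and since $c\in\Int C$ was arbitrary $\{y_n\}$ is Cauchy.

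Finally, completeness gives $y_n\to z$ for some $z$, and since $\{gx_{n+1}\}\subseteq gX$ is Cauchy in the complete set $gX$ I may write $z=gv$. Passing to the limit in $d(fv,y_{n+1})=d(fv,fx_{n+1})\preceq\varphi(u_n)$, $u_n\in\{d(z,y_n),d(z,fv),d(y_n,y_{n+1})\}$ --- splitting into the case $u_n=d(z,fv)$ for all large $n$ (use $\varphi(w)\prec w$ for $w\neq\theta$ and closedness of $C$) and the case where one of the null terms $d(z,y_n),d(y_n,y_{n+1})$ is realised for infinitely many $n$ (use $\|\varphi(w)\|\le\|w\|$) --- one gets $fv=z=gv$, a point of coincidence. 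If $fv'=gv'=z'$ is another, then $d(z,z')=d(fv,fv')\preceq\varphi(u)$ with $u\in\{d(z,z'),\theta\}$, forcing $z=z'$; the point of coincidence is unique. Weak compatibility then gives $fz=fgv=gfv=gz$, so $fz=gz$ is a point of coincidence and uniqueness yields $fz=z$; hence $z$ is a common fixed point, and, since every common fixed point is a point of coincidence, it is the only one.
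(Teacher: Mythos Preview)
The survey does not prove this theorem; it merely quotes it from \cite{Ar} as an illustration of a result that \emph{cannot} be obtained by the scalarisation trick used for Theorem~\ref{CFPT}. So there is nothing in the paper to compare against, and I assess your argument on its own merits.

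Your overall architecture (Jungck iteration, $\delta_n\preceq\varphi^n(\delta_0)\to\theta$ via axiom~(iv), a Matkowski-type Cauchy step, then the standard weak-compatibility end-game) is sound, and the passage to the monotone norm $\|\cdot\|_e$ is well used. The genuine gap is exactly the lemma you flag: that for each $c\in\Int C$ one can find $h\in\Int C$ with $\varphi(c+h)\preceq c$. Your sketch---that along $h_k=\tfrac1k(c-\varphi(c))$ the failure of $\varphi(c+h_k)\preceq c$ for all $k$ would contradict $\varphi^n(\cdot)\to\theta$---does not work. Axiom~(iv) controls \emph{iterates} of $\varphi$ at a fixed argument, not a single application of $\varphi$ along a moving argument; and because the cone order is not total, ``$\varphi(c+h_k)\not\preceq c$'' carries essentially no information. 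The real-line version of this step succeeds only because from $\varphi^{n_0}(2c)\le c$ and minimality of $n_0$ one gets $\varphi^{n_0-1}(2c)>c$ automatically; in a cone that last implication is false.

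The repair is not to prove your lemma but to bypass it, and this is precisely what axiom~(iii) is for. Given $c\in\Int C$, set $h:=c-\varphi(c)\in\Int C$, choose $N$ with $\delta_n\ll h$ for $n\ge N$, and run the induction with the hypothesis $d(y_N,y_{N+p})\ll c$ (rather than $\ll c+h$). In the inductive step each admissible $u$ satisfies $u\ll c$, hence $\varphi(u)\preceq\varphi(c)$ by monotonicity, and then
\[
d(y_N,y_{N+p+1})\preceq\delta_N+\varphi(u)\preceq\delta_N+\varphi(c)\ll h+\varphi(c)=c,
\]
closing the induction without any auxiliary lemma. With this change the Cauchy part goes through; your treatment of the limit point and of uniqueness under weak compatibility is correct as written.
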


An additional example is provided by the following result.

\begin{theorem}{\rm\cite{CvRa}}
Let $(X,d)$ be a complete cone metric space over
$(E,\|\cdot\|,\preceq_C)$ with a solid cone. Let $f:X\to X$ and
suppose that there exists a positive bounded operator on $E$ with
the spectral radius $r(A)<1$, such that $d(fx,fy)\preceq
A(d(x,y))$ for all $x,y\in X$. Then $f$ has a unique fixed point
in ~$X$.
\end{theorem}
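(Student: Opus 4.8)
The plan is to carry out the Picard iteration directly in $(X,d)$, using the operator analogue of the Banach estimate and the fact that $r(A)<1$ makes the tails of $\sum_{n\ge 0}A^{n}$ small in operator norm. Fix $x_{0}\in X$, set $x_{n+1}=fx_{n}$ and $u=d(x_{1},x_{0})$. Since $A$ is positive, hence monotone for $\preceq_{C}$, induction on the hypothesis $d(fx,fy)\preceq A(d(x,y))$ gives $d(x_{n+1},x_{n})\preceq A^{n}(u)$, and for $m>n$ the triangle inequality together with positivity of each $A^{j}$ yields
\[
\theta\preceq d(x_{m},x_{n})\preceq\sum_{j=n}^{m-1}A^{j}(u)\preceq\sum_{j=n}^{\infty}A^{j}(u)=:b_{n},
\]
where the series converges because $r(A)<1$ forces $\sum_{j}\|A^{j}\|<\infty$ (Gelfand's formula $r(A)=\lim_{n}\|A^{n}\|^{1/n}$), and the last inequality holds since $C$ is closed and each $A^{j}(u)\succeq\theta$. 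Moreover $\|b_{n}\|\le\bigl(\sum_{j\ge n}\|A^{j}\|\bigr)\|u\|\to 0$, so $b_{n}\to\theta$ in norm and hence $\{b_{n}\}$ is a $c$-sequence for every $c\in\Int C$. Combined with $\theta\preceq d(x_{m},x_{n})\preceq b_{n}$ and $\Int C+C\subseteq\Int C$, this shows that $\{x_{n}\}$ is a Cauchy sequence, so by completeness $x_{n}\to x^{*}$ for some $x^{*}\in X$.

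Next I would verify that $x^{*}$ is a fixed point. First, I would pass to the limit in $d(x^{*},x_{n})\preceq d(x^{*},x_{m})+d(x_{m},x_{n})\preceq d(x^{*},x_{m})+b_{n}$: for each $c\in\Int C$ one may pick $m>n$ with $d(x^{*},x_{m})\ll c$ (hence $\preceq c$), which gives $d(x^{*},x_{n})\preceq c+b_{n}$ for every $c\in\Int C$; taking $c=\tfrac{1}{k}e$ for a fixed $e\in\Int C$ and using closedness of $C$ yields the purely order-theoretic bound $d(x^{*},x_{n})\preceq b_{n}$. Then
\[
\theta\preceq d(fx^{*},x^{*})\preceq A\bigl(d(x^{*},x_{n})\bigr)+d(x_{n+1},x^{*})\preceq A(b_{n})+d(x_{n+1},x^{*}),
\]
and since $\|A(b_{n})\|\le\|A\|\,\|b_{n}\|\to 0$ while $\{d(x_{n+1},x^{*})\}$ is a $c$-sequence for every $c\in\Int C$ (by the definition of convergence), the right-hand side is $\ll c$ for every $c\in\Int C$ once $n$ is large; hence $d(fx^{*},x^{*})\ll c$ for all $c\in\Int C$, which forces $d(fx^{*},x^{*})=\theta$, i.e. $fx^{*}=x^{*}$. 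Uniqueness is analogous: if $fp=p$ and $fq=q$, then $d(p,q)=d(fp,fq)\preceq A(d(p,q))$, and iterating (using positivity) $d(p,q)\preceq A^{n}(d(p,q))$ with $\|A^{n}(d(p,q))\|\le\|A^{n}\|\,\|d(p,q)\|\to 0$; closedness of $C$ then forces $d(p,q)=\theta$.

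The point where care is needed — and the reason this statement belongs to the present subsection rather than the previous one — is that normality of $C$ must not be invoked: cone convergence $x_{n}\to x^{*}$ does not entail $\|d(x^{*},x_{n})\|\to 0$, and the operator inequality $d(fx,fy)\preceq A(d(x,y))$ cannot be reduced to a scalar contraction $D(fx,fy)\le\lambda D(x,y)$ with $\lambda<1$, since the operator norm of $A$ (and its analogue with respect to a norm $\|\cdot\|_{e}$ from Proposition~\ref{prop2}) may well exceed $1$. Consequently everything must be phrased in terms of $\preceq$ and $\ll$; the two slightly delicate moments are the limiting argument that produces $d(x^{*},x_{n})\preceq b_{n}$ (which replaces the norm continuity of $d$, unavailable for non-normal cones), and, crucially, the observation in the fixed-point step that $A$ is applied to the dominating vector $b_{n}$, small in norm, rather than to $d(x^{*},x_{n})$, which need not be. If one were willing to assume $C$ normal, the whole argument collapses to a routine variant of the Banach iteration. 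One can also avoid any use of completeness of $E$ by keeping the finite partial sums $\sum_{j=n}^{m-1}A^{j}(u)$ throughout instead of the series $b_{n}$.
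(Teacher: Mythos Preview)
The paper does not give its own proof of this theorem: it is quoted from \cite{CvRa} precisely as an example of a result that \emph{cannot} be obtained by the Minkowski-functional reduction of Section~\ref{Sec4}. So there is nothing to compare your argument against line by line; what matters is whether your proof is sound and whether your closing commentary matches the paper's point.

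Your proof is correct. The Picard-iteration estimate $d(x_{n+1},x_n)\preceq A^{n}u$ with the telescoping bound $d(x_m,x_n)\preceq\sum_{j=n}^{m-1}A^{j}u$ is exactly the operator analogue of the Banach argument; the use of Gelfand's formula to conclude $\sum_j\|A^{j}\|<\infty$, and hence that the tails form a $c$-sequence, is the standard Perov-type step (this is essentially the approach of \cite{CvRa}). Your handling of the two delicate points---deriving the order bound $d(x^{*},x_n)\preceq b_n$ from cone convergence without invoking normality, and applying $A$ to the norm-small dominating vector $b_n$ rather than to $d(x^{*},x_n)$---is precisely what is needed, and your final observation that one may replace $b_n$ by the finite partial sums to avoid assuming $E$ complete is a useful refinement. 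Your concluding paragraph accurately reproduces the paper's own rationale for placing this theorem in Subsection~4.2: passing to $(E,\|\cdot\|_e)$ gives no control on the operator norm or spectral radius of $A$ in the new norm, so the reduction to a scalar contraction fails.
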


In this case, passing to the space $(E,\|\cdot\|_e,\preceq_C)$
cannot help, since the two norms need not be equivalent and we
have no information about the boundedness of $A$ and (if it is
bounded) how $r(A)$ has changed.

A very important special class of fixed point results are the
Caristi-type ones, and the huge literature is devoted to them.
When cone metric spaces are concerned, they are treated in
details, e.g., in the papers \cite{AK,ChBN,KaaD,Kh1,Kh2,Kh,wLO1},
so we will give here only some basic information.

The following result was proved in \cite{AK}.

\begin{theorem}{\rm \cite[Theorem 2]{AK}}
Let $(X,d)$ be a complete cone metric space over an OTVP
$(E,\|\cdot\|,\preceq_C)$ with a solid cone. Let $F:X\to C$ be a
lower-semi-continuous map and let $f:X\to X$ satisfy the condition
\[
d(x,fx)\preceq F(x)-F(f(x))
\]
for any $x\in X$. Then $f$ has a fixed point.
\end{theorem}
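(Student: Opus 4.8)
The plan is to transplant the order-theoretic (Br{\o}ndsted-type) proof of Caristi's fixed point theorem into the cone setting, keeping the comparison relation cone-valued and using a scalar norm only as an auxiliary tool. By Propositions \ref{prop2} and \ref{equivalence} we may assume from the start that, for a fixed $e\in\Int C$, the norm $\|\cdot\|_e$ is monotone with respect to $C$ (so $\theta\preceq_C a\preceq_C b$ implies $\|a\|_e\le\|b\|_e$), that $C$ is solid and normal for $\|\cdot\|_e$ with the same set of interior points, that $d$ is continuous in both variables, and that $(X,D)$ with $D(x,y)=\|d(x,y)\|_e$ is a complete metric space carrying exactly the same Cauchy and convergent sequences as $(X,d)$; in particular the Sandwich Theorem \eqref{normal} holds.

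First I would introduce on $X$ the relation
\[
x\preceq_F y\quad\Longleftrightarrow\quad d(x,y)\preceq_C F(x)-F(y)
\]
and check that it is a partial order. Reflexivity is immediate from $d(x,x)=\theta$; transitivity follows from axiom (iii) of Definition \ref{Def1} together with compatibility of $\preceq_C$ with addition; antisymmetry is the only point that really uses the cone axioms: if $x\preceq_F y$ and $y\preceq_F x$, adding the two defining inequalities and using $d(x,y)=d(y,x)$ gives $2\,d(x,y)\preceq_C\theta$, while axiom (i) gives $\theta\preceq_C d(x,y)$, so $2\,d(x,y)\in C\cap(-C)=\{\theta\}$ and $x=y$. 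The decisive remark is that the Caristi-type hypothesis $d(x,fx)\preceq_C F(x)-F(fx)$ says precisely that $x\preceq_F f(x)$ for every $x\in X$; hence any $\preceq_F$-maximal element $x^{\ast}$ satisfies $x^{\ast}\preceq_F f(x^{\ast})$, maximality forces $f(x^{\ast})=x^{\ast}$, and the whole problem is reduced to producing a maximal element by Zorn's Lemma.

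The substantial part is to show that every chain $\mathcal K\subset(X,\preceq_F)$ has an upper bound (we may assume $\mathcal K$ has no largest element). Along $\mathcal K$ the map $F$ is $\preceq_C$-decreasing and $C$-valued, so $\|F(\cdot)\|_e$ is nonincreasing and bounded below by $0$; I would select a $\preceq_F$-increasing sequence $\{x_n\}$ in $\mathcal K$ with $\|F(x_n)\|_e\downarrow\alpha:=\inf_{x\in\mathcal K}\|F(x)\|_e$. For $m\ge n$ we then have $x_n\preceq_F x_m$, hence $\theta\preceq_C d(x_n,x_m)\preceq_C F(x_n)-F(x_m)$. Granting that $\{x_n\}$ is $D$-Cauchy, completeness yields a limit $x^{\ast}$; letting $m\to\infty$ in $d(x_n,x_m)\preceq_C F(x_n)-F(x_m)$ and using continuity of $d$, closedness of $C$, and lower semicontinuity of $F$ gives $d(x_n,x^{\ast})\preceq_C F(x_n)-F(x^{\ast})$, i.e.\ $x_n\preceq_F x^{\ast}$ for all $n$; a short argument with $\alpha$ then shows $x^{\ast}$ bounds all of $\mathcal K$ (for $y\in\mathcal K$, either $y\preceq_F x_n$ for some $n$, or $x_n\preceq_F y$ for every $n$, in which case $\|F(y)\|_e=\alpha$ and minimality forces $y\preceq_F x^{\ast}$). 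Zorn's Lemma then supplies a maximal element and the theorem follows.

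The step I expect to be the genuine obstacle---and the reason this result is placed in the present subsection rather than being amenable to the reduction of the preceding subsection---is precisely the claim that $\{x_n\}$ is $D$-Cauchy. Here the scalarisation device of the preceding subsection is of no help: from $\theta\preceq_C d(x_n,x_m)\preceq_C F(x_n)-F(x_m)$ the monotone norm gives only $D(x_n,x_m)\le\|F(x_n)-F(x_m)\|_e$, and convergence of the scalars $\|F(x_n)\|_e$ does \emph{not} imply $\|F(x_n)-F(x_m)\|_e\to0$ when merely $\theta\preceq_C F(x_m)\preceq_C F(x_n)$ (a monotone norm need not control differences of order-comparable positive elements, as the ``travelling bump'' examples in spaces such as $C[0,1]$ or $\ell^{\infty}$ show). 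Consequently the Cauchy property cannot be extracted from the real surrogate $\|F(\cdot)\|_e$ alone; one must instead work with the order of $E$ itself, exploiting that the telescoping sums $\sum_{k=n}^{m-1}\bigl(F(x_k)-F(x_{k+1})\bigr)=F(x_n)-F(x_m)$ form a $\preceq_C$-increasing net bounded above by $F(x_0)$, and combining this order-boundedness with the lower semicontinuity of $F$ to force $F(x_n)-F(x_m)\to\theta$ and hence $D(x_n,x_m)\to0$. Once the chain condition is secured in this way, the remaining bookkeeping (maximality implying a fixed point) is entirely routine.
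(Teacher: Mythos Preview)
The paper does not give its own proof of this theorem: it is quoted from \cite{AK}, and the surrounding discussion is precisely that the reduction-to-metric device of the previous subsection is \emph{not} known to work here---whether the result follows from the classical Caristi theorem via the Minkowski functional is stated as open (it does follow if the functional can be chosen additive, by \cite{Kh}). So there is no proof in the paper to compare against; the point of this passage is to flag the obstacle, not to overcome it.

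Your plan identifies exactly the right obstacle, but the resolution you sketch in the final paragraph does not close the gap. From $\theta\preceq_C d(x_n,x_m)\preceq_C F(x_n)-F(x_m)$ you need $F(x_n)-F(x_m)\to\theta$, and you propose to obtain this from two ingredients: (i) the partial sums $F(x_0)-F(x_m)$ form a $\preceq_C$-increasing net order-bounded above by $F(x_0)$; (ii) lower semicontinuity of $F$. Neither helps. For (i), order-bounded monotone sequences in a normed space with a solid normal cone need not be norm-Cauchy: in $E=\ell^\infty$ with the usual cone (solid, normal with constant $1$), the sequence $a_n=(0,\dots,0,1,1,\dots)$ with $n$ leading zeros is $\preceq_C$-decreasing, bounded below by $\theta$, has $\|a_n\|_\infty\equiv1$ (so the scalar $\|F(x_n)\|_e\downarrow\alpha$ condition is met), yet $\|a_n-a_m\|_\infty=1$ for all $n\ne m$. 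For (ii), lower semicontinuity of $F$ is a statement about how $F$ behaves along sequences that already converge in $X$; invoking it to prove that $\{x_n\}$ is Cauchy is circular. What you have rediscovered is exactly the gap between $D(x_n,x_m)\le\|F(x_n)-F(x_m)\|_e$ and $D(x_n,x_m)\le\|F(x_n)\|_e-\|F(x_m)\|_e$, which cannot be bridged by monotonicity of the norm alone; this is why the paper lists the theorem among those that cannot be obtained by the scalarisation method, and why the proof in \cite{AK} proceeds by a different route.
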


It is not clear whether, by passing to the space
$(E,\|\cdot\|_e,\preceq)$ using the Minkowski functional, this
theorem reduces to the classical Caristi result. However, it is
proved in \cite{Kh} that this is still the case if $e\in\Int C$
can be chosen so that the corresponding Minkowski functional
$p_{[-e,e]}$ is additive. It is an open question whether this can
be done without this assumption.

\section{Some ``hybrid'' spaces}\label{Sec5}

As has been already mentioned, besides cone metric spaces, a lot
of other generalizations of standard metric spaces have been
introduced and fixed point problems in these settings have been
investigated by several authors. There are among them some
``hybrid'' spaces, i.e., spaces where axioms of several types are
used simultaneously. Such are, e.g., cone $b$-metric spaces,
cone-$G$-metric spaces, cone rectangular metric spaces, partial
cone metric spaces, cone spaces with $c$-distance and some others.
We will recall here definitions of two of these classes and how
the work in these classes can be easily reduced to the already
known ones. Similar conclusions hold for the other mentioned
classes, too.

\subsection{Cone $b$-metric spaces}

Cone $b$-metric spaces were introduced (under a different name) in
\cite{CV1} and some fixed point results were presented in this
setting.

\begin{definition}
Let $X$ be a non-empty set, $(E,\tau,\preceq)$ be an OTVS and
$s\ge1$ be a real number. If a mapping $d:X\times X\to E$
satisfies the conditions
\begin{itemize}
\item[(i)] $d(x,y)\succeq\theta$ for all $x,y\in X$ and
$d(x,y)=\theta$ if and only if $x=y$; \item[(ii)] $d(x,y)=d(y,x)$
for all $x,y\in X$; \item[(iii)] $d(x,z)\preceq s(d(x,y)+d(y,z))$
for all $x,y,z\in X$,
\end{itemize}
then $d$ is called a \textit{cone $b$-metric} on $X$ and $(X,d)$
is called a \textit{cone $b$-metric space with parameter~$s$}.
\end{definition}

Later, these and other researchers obtained a lot of (common)
fixed point results (see,
\cite{AMAR,BXShCR,FaAbRa,FeMaFi,GNRR,Maitra,OlBrOp,ShiXu,StM,XuHuang,ZhXuDoGo}.
However, similarly as in the case of cone metric spaces, most of
these results can be reduced to the case of classical $b$-metric
spaces of Bakhtin \cite{Bakh} and Czerwik \cite{SCz}. Namely,
similarly as Proposition \ref{equivalence}, the following can be
deduced from Proposition \ref{prop2}.

\begin{proposition}\label{equivalence-b}
Let $X$ be a non-empty set, $(E,\tau,\preceq_C)$ be an OTVS with a
solid cone~$C$ and $s\ge1$ be a real number. If $e\in\Int C$ is
arbitrary, then $D(x,y)=\|d(x,y)\|_e$ is a $b$-metric on $X$.
Moreover, a sequence $\{x_n\}$ is Cauchy $($convergent\/$)$ in
$(X,d)$ if and only if it is Cauchy $($convergent\/$)$ in $(X,D)$.
\end{proposition}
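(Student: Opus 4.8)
The plan is to mirror the argument for Proposition \ref{equivalence} almost verbatim, the only new ingredient being the relaxed triangle inequality with the factor $s$. First I would invoke Proposition \ref{prop2}: since $C$ is solid, fixing $e\in\Int C$ yields the Minkowski functional $\|\cdot\|_e = p_{[-e,e]}$, which is a norm on $E$, monotone with respect to $\preceq_C$, and such that $C$ is normal and solid in $(E,\|\cdot\|_e)$ with normal constant $1$. Set $D(x,y)=\|d(x,y)\|_e$. Nonnegativity of $D$ is automatic, and $D(x,y)=0$ forces $d(x,y)=\theta$ (since $\|\cdot\|_e$ is a norm), hence $x=y$ by axiom (i) of the cone $b$-metric; conversely $D(x,x)=\|\theta\|_e=0$. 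Symmetry of $D$ is immediate from axiom (ii). For the $b$-triangle inequality, from axiom (iii) we have $\theta\preceq d(x,z)\preceq s\bigl(d(x,y)+d(y,z)\bigr)$, and applying monotonicity of $\|\cdot\|_e$ together with the triangle inequality and homogeneity of the norm gives $D(x,z)=\|d(x,z)\|_e\le s\bigl\|d(x,y)+d(y,z)\bigr\|_e\le s\bigl(D(x,y)+D(y,z)\bigr)$. Thus $D$ is a $b$-metric on $X$ with the same parameter $s$.

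For the equivalence of convergence and Cauchyness, the key observation is that, in the norm $\|\cdot\|_e$, the notion of a $c$-sequence and the notion of a sequence tending to $\theta$ coincide, because $C$ is normal in this norm (this is the remark recorded after the definition of $c$-sequences in Section \ref{Sec2}); and $[-e,e]$ is the closed unit ball of $\|\cdot\|_e$, so $v\ll e$ means $\|v\|_e<1$ roughly speaking—more precisely, $v\in\Int C\cap(e-\Int C)\cap(e+\Int C)$ translates into $\|v\|_e<1$ after scaling. The clean way is: $\{x_n\}$ converges to $x$ in $(X,d)$ iff for every $c\in\Int C$ eventually $d(x_n,x)\ll c$, iff $\{d(x_n,x)\}$ is a $c$-sequence for every $c$, iff $\|d(x_n,x)\|_e\to 0$ (by normality in the new norm), iff $D(x_n,x)\to 0$, iff $\{x_n\}$ converges to $x$ in $(X,D)$. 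The same chain with $d(x_m,x_n)$ in place of $d(x_n,x)$ handles the Cauchy case. Note this part does not use the $b$-inequality at all and is literally identical to the cone-metric case.

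I do not expect any genuine obstacle here: the statement is a routine transcription of Proposition \ref{equivalence}, and the only point requiring a word of care is that the factor $s$ survives the passage to $D$ unchanged—which it does, since $s$ is a real scalar and $\|\cdot\|_e$ is positively homogeneous, so $\|s\,w\|_e = s\|w\|_e$. One could simply write: ``The proof is identical to that of Proposition \ref{equivalence}, using Proposition \ref{prop2}; the only difference is that the triangle inequality is replaced by its $b$-version, and the constant $s$ is preserved because $\|\cdot\|_e$ is homogeneous.'' If a self-contained argument is preferred, the three paragraphs above spell it out in full.
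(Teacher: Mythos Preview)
Your proposal is correct and follows exactly the approach the paper itself indicates: the paper does not spell out a proof of Proposition~\ref{equivalence-b} but simply remarks that it ``can be deduced from Proposition~\ref{prop2}'' in the same way as Proposition~\ref{equivalence}, which is precisely what you do. Your observation that the parameter $s$ passes through unchanged by homogeneity of $\|\cdot\|_e$, and that the convergence/Cauchy equivalence is literally the cone-metric argument (not using the $b$-inequality), is accurate and is all that the paper leaves implicit.
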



\subsection{Cone rectangular spaces}
The following definition was given in \cite{aZa}.

\begin{definition}
Let $X$ be a non-empty set and $(E,\tau,\preceq)$ be an OTVS. If a
mapping $d:X\times X\to E$ satisfies the conditions
\begin{itemize}
\item[(i)] $d(x,y)\succeq\theta$ for all $x,y\in X$ and
$d(x,y)=\theta$ if and only if $x=y$; \item[(ii)] $d(x,y)=d(y,x)$
for all $x,y\in X$; \item[(iii)] $d(x,y)\preceq
d(x,w)+d(w,z)+d(z,y)$ for all $x,y\in X$ and all distinct points
$w,z\in X\setminus\{x,y\}$,
\end{itemize}
then $d$ is called a \textit{cone rectangular metric} on $X$ and
$(X,d)$ is called a \textit{cone rectangular metric space}.
\end{definition}

In the papers \cite{aZa,jL,MaShSh,MaShSe2,RasSal,TcH,VeF}, some
fixed point results in cone rectangular spaces were obtained. The
following proposition shows how most of these results can be
easily reduced to the corresponding known ones in the environment
of rectangular spaces defined by Branciari in \cite{Bran}.

\begin{proposition}\label{equivalence-rect}
Let $X$ be a non-empty set and $(E,\tau,\preceq_C)$ be an OTVS
with a solid cone~$C$. If $e\in\Int C$ is arbitrary, then
$D(x,y)=\|d(x,y)\|_e$ is a rectangular metric on $X$. Moreover, a
sequence $\{x_n\}$ is Cauchy $($convergent\/$)$ in $(X,d)$ if and
only if it is Cauchy $($convergent\/$)$ in $(X,D)$.
\end{proposition}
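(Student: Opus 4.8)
The plan is to mimic exactly the argument behind Proposition \ref{equivalence}, replacing the triangle inequality by the rectangular inequality throughout. First I would invoke Proposition \ref{prop2}: since $C$ is solid and $e\in\Int C$, the Minkowski functional $\|\cdot\|_e=p_{[-e,e]}$ is a norm on $E$, and $C$ is normal (indeed monotone) with respect to it, so $\theta\preceq u\preceq v$ implies $\|u\|_e\le\|v\|_e$. This is the only nontrivial input; everything else is bookkeeping. Set $D(x,y)=\|d(x,y)\|_e$.

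Next I would verify the three axioms of a rectangular (Branciari) metric for $D$. Non-negativity is immediate, and $D(x,y)=0\iff d(x,y)=\theta\iff x=y$ by axiom (i) for $d$ together with the fact that $\|\cdot\|_e$ is a genuine norm (so $\|w\|_e=0\iff w=\theta$). Symmetry $D(x,y)=D(y,x)$ follows at once from symmetry of $d$. For the rectangular inequality, fix $x,y\in X$ and distinct points $w,z\in X\setminus\{x,y\}$; from axiom (iii) for $d$ we have $\theta\preceq d(x,y)\preceq d(x,w)+d(w,z)+d(z,y)$, and monotonicity of $\|\cdot\|_e$ (all summands lie in $C$) gives
\[
D(x,y)=\|d(x,y)\|_e\le\|d(x,w)+d(w,z)+d(z,y)\|_e\le D(x,w)+D(w,z)+D(z,y),
\]
the last step by the triangle inequality for the norm. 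Hence $D$ is a rectangular metric.

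Finally I would check the equivalence of the two notions of convergence and of Cauchyness. This is precisely the content already recorded after Definition \ref{Def2} and in Proposition \ref{equivalence}: for $c\in\Int C$ one has $d(x_n,x)\ll c$ exactly when $c-d(x_n,x)\in\Int C$, and on the other hand $D(x_n,x)=\|d(x_n,x)\|_e<\varepsilon$ says $d(x_n,x)\in\varepsilon[-e,e]$, i.e.\ $-\varepsilon e\preceq d(x_n,x)\preceq\varepsilon e$; combined with the normality of $C$ in $\|\cdot\|_e$ and the description of $\Int C$ via order-intervals $[-e,e]$, these two conditions define the same convergent (and the same Cauchy) sequences. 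The verbatim argument given for $(X,d)$ versus $(X,D)$ in the cone-metric case transfers without change, since it never used the triangle inequality. I do not expect any genuine obstacle here: the whole point is that once Proposition \ref{prop2} is in hand, the rectangular inequality is preserved by the monotone norm in exactly the same way the triangle inequality was, so Proposition \ref{equivalence-rect} is a direct transcription of Proposition \ref{equivalence}.
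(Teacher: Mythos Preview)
Your proposal is correct and follows exactly the approach the paper intends: the paper gives no explicit proof of Proposition~\ref{equivalence-rect}, merely indicating that it is obtained from Proposition~\ref{prop2} in the same way as Propositions~\ref{equivalence} and~\ref{equivalence-b}. Your sketch spells this out faithfully---monotonicity of $\|\cdot\|_e$ carries the cone rectangular inequality to the scalar one, and the equivalence of convergence/Cauchyness is the same argument as before since the triangle inequality plays no role there.
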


\section{The case of a non-solid cone}

In this section, we will briefly consider the case when the
underlying cone is non-solid, but normal.

Let $(E,\|\cdot\|,\preceq_C)$ be an ordered normed space and let
the cone $C$ be normal, but non-solid (see, e.g., Examples
\ref{Ex4} and \ref{Ex5}). The definition of a cone metric $d$ on
an nonempty set $X$, over~$E$, is the same as Definition
\ref{Def1}. However, definition of convergent and Cauchy
sequences, as well as of the completeness, must be modified.

\begin{definition}
Let $(X,d)$ be a cone metric space over $(E,\|\cdot\|,\preceq_C)$,
with the cone $C$ being normal $($possibly non-solid\/$)$. A
sequence $\{x_n\}$ in $X$ is said to converge to $x\in X$ if
$d(x_n,x)\to\theta$ as $n\to\infty$ $($the convergence in the
sense of norm in~$E$\/$)$. Similarly, Cauchy sequences are
introduced, as well as the completeness of the space.
\end{definition}

In the view of Proposition \ref{lemma1}, we can assume that the
normal constant of $C$ is equal to~$1$, and further consider just
the standard metric space $(X,D)$, where $D(x,y)=\|d(x,y)\|$.

Open (resp.\ closed) balls in $(X,d)$ can be introduced as
follows: let $x\in X$ and $c\in C\setminus\{\theta\}$. The open
(resp.\ closed) ball with centre $x$ and radius $c$ are given by
$B(x,c)=\{y\in X\mid d(y,x)\prec c\}$ (resp.\ $B(x,c)=\{y\in X\mid
d(y,x)\preceq c\}$). It can be easily shown that an open (closed)
ball is an open (closed) subset of $(X,d)$. We will show this,
e.g., in the case of a closed ball.

Thus, let $y_n\in B[x,c]$ and $y_n\to y$ as $n\to\infty$ in
$(X,d)$. Then we have $d(y,x)\preceq d(y,y_n)+d(y_n,x)\preceq
d(y,y_n)+c$. By the Sandwich Theorem (which holds since the cone
$C$ is normal), we get that $d(y,x)\preceq c$, which means that
$y\in B(x,c)$.

However, note that the collection of open balls $\{\,B(x,c)\mid
x\in X,\,c\in C\setminus\{\theta\}\,\}$ does not necessarily form
a base of topology on $X$. Indeed, it is not sure that the
intersection of two open balls contains a ball. For example, for
$X=\RR^2$, $C=\{\,(x,y)\mid x,y\ge0\,\}$ and the points
$c_1=(1,0)$, $c_2=(0,1)$ from $C\setminus\{\theta\}$, there is no
$c\in C\setminus\{\theta\}$ such that $c\prec c_1$ and $c\prec
c_2$.

Nevertheless, the theorem on completion holds in this case, too.

We conclude with the following open question.

\begin{question}
Construct a cone in a real TVS which is both non-normal and
non-solid.
\end{question}

\subsection*{Acknowledgement}
The first and second author are grateful to the Ministry of
Education, Science and Technological Development of Serbia, Grants
No.\ 174024 and 174002, respectively.

\end{document}